\newcommand{\myjoin}[2]{\,\vphantom{\vee}_{#1}\!\!\vee_{#2}}
\newcommand{\bG}{\mathbb{G}}
\newcommand{\bH}{\mathbb{H}}
\newcommand{\ba}{\mathbin{\backslash}}
\newcommand{\con}{\mathbin{/}}
\theoremstyle{definition}
\newtheorem{terminology}{Terminology}
\newtheorem{construction}{Construction}
\begin{document}

\title*{A coarse Tutte polynomial for hypermaps}
\author{Joanna A. Ellis-Monaghan, \\ Iain Moffatt\\ Steven Noble}
\institute{Joanna A. Ellis-Monaghan\at Korteweg-de Vries Instituut voor Wiskunde, Universiteit van Amsterdam, Science Park 105-107, 1098 XG Amsterdam, The Netherlands, \email{j.a.ellismonaghan@uva.nl}
\and Iain Moffatt \at Department of Mathematics, Royal Holloway, University of London, Egham, TW20~0EX, United Kingdom, \email{iain.moffatt@rhul.ac.uk}
\and Steven Noble \at School of Computing and Mathematical Sciences,  Birkbeck, University of London, Malet Street, London, WC1E~7HX, United Kingdom, \email{s.noble@bbk.ac.uk}
}

\maketitle


\abstract{We give an analogue of the Tutte polynomial for hypermaps.  This polynomial can be defined as either a sum over subhypermaps, or recursively through deletion-contraction reductions where the terminal forms consist of isolated vertices. Our Tutte polynomial extends the classical Tutte polynomial of a graph as well as the Tutte polynomial of an embedded graph (i.e., the ribbon graph polynomial), and it is a specialization of the transition polynomial via a medial map transformation.  We
 give hypermap duality and partial duality identities for our polynomial, as well as
 some evaluations, and examine relations between our polynomial and other hypermap polynomials.
}

\section{Introduction}\label{sec:intro}

In this paper we introduce a Tutte polynomial for hypermaps as a direct generalisation of the Tutte polynomials for abstract graphs and for maps (graphs cellularly embedded on surfaces).

An edge of a graph may be defined as a multiset containing exactly two (not necessarily distinct) vertices. Hypergraphs generalize graphs by allowing hyperedges which are multisets containing any number of vertices.  A map may be thought of as a drawing of a graph on a surface (compact 2-manifold) so that the edges do not cross and so that each face is a region of the surface homeomorphic to a disc. Similarly, a hypermap may be thought of as a hypergraph drawn on a surface. (See Section~\ref{sec:hyper} for formal definitions.)

Hypergraph and hypermap models are attracting increasing attention as applications of traditional network models seek refinements through higher-order interactions, with~\cite{Bat21,Bat20} providing particularly compelling overviews of the urgency for research in this direction.  These higher-order interactions, which consider connections among multiple nodes instead of just pairwise connections,  correspond to systems with hyperedges in place of simply edges. 
Applications, particularly in physics, have already led to extensions of the Potts model to hypergraphs, for example~\cite{BB09,Grim94}, and efforts have begun to extend the Tutte polynomial to hypergraphs, for example in \cite{BKP22, Kal13}.

Our interest here is in constructing a Tutte polynomial for hypermaps.
To do this, since hypermaps generalize maps, it is natural to build on the existing theory of map polynomials.
There is a rich literature on analogues of the Tutte polynomial for maps (see, e.g.,~\cite{zbMATH01801590,zbMATH07635160,zbMATH07395405,zbMATH06832600,iainmatrixannals} and the survey~\cite{Chm22}).
By adapting the approach described in~\cite{iainmatrixannals}, we begin by constructing a version of the dichromatic polynomial for hypermaps. Classical connections between the dichromatic and Tutte polynomials then lead us to a Tutte polynomial for hypermaps. 
The reason we take this approach is that it keeps deletion-contraction properties at its heart, and so the polynomials we construct can be defined by recursive deletion-contraction relations with a base case consisting of hyperedgeless hypermaps. 

The hypermap deletion and contraction relations we use here are direct extensions of map operations.
Deletion of a hyperedge removes the entire edge from the hypermap, and contraction takes the partial dual of a hyperedge (see~\cite{CV22,ben}) and then removes it.  Since there are other, more refined, definitions of hyperedge deletion and contraction, such as those given in~\cite{CHpreHyperDelCon,CHpreWhit}, we describe our hypermap Tutte polynomial as `coarse' because we use these `whole hyperedge' operations. The coarse Tutte polynomial for hypermaps that emerges from this choice captures many desirable properties,  and we give several identities and evaluations for it.

We also discuss various interconnections between our hypermap polynomial and other polynomials from the literature.  We see that for maps, which are equivalent to ribbon graphs, the coarse Tutte polynomial coincides with the ribbon graph polynomial.  We also show that, via a medial map construction, the coarse Tutte polynomial for hypermaps is a specialization of the transition polynomial.  Finally, we make a comparison with the recent hypermap Whitney polynomial of Cori and Hetyei~\cite{CHpreWhit}, which uses a more refined edge deletion, showing that neither of the two polynomials determine the other.

J\'anos Makowsky is a keen collector of graph polynomials for his zoo~\cite{zbMATH05552168}. Hypermaps can provide many more such specimens, which we believe will enrich his zoo. Potentially they provide a fertile setting for the extension of many of his ideas, particularly his Monadic Second Order Logic framework for graph polynomials (see ~\cite{zbMATH02136947} and also ~\cite{MR3960189}).  
We close by suggesting this extension as a possible direction for future research and discussing complexity issues.

\section{Hypermaps}\label{sec:hyper}
We allow graphs to have loops and parallel edges, and follow the terminology in~\cite{zbMATH05202336}.

An \emph{embedded graph} or \emph{map} consists of a closed surface $\Sigma$ (not necessarily connected and possibly nonorientable), a set of distinct points on the surface (called \emph{vertices}) and a set of simple arcs (called \emph{edges}) whose ends lie on vertices. Furthermore, an edge may only intersect a vertex at its ends, and edges may not intersect except at their ends.
The vertices and edges divide the surface into regions, called \emph{faces}, and we insist that each face is homeomorphic to a disc. (Thus we only consider \emph{cellularly embedded graphs} here.)
A consequence of this is that each component of the underlying graph must lie in a different connected component of the closed surface.
In this paper we use both the terms embedded graph and map. We favour the term map but use embedded graph when it is the preferred term in the sources we are citing. (We also work with ribbon graphs in Section~\ref{ss:classical} for this reason.)

We shall define a \emph{hypergraph} to be a bipartite graph $G=(V_v\sqcup V_e ,E)$ in which multiple edges are allowed but no vertex of $V_e$ is isolated. The set $V_v$ forms the set of \emph{hypervertices} of the hypergraph, and each vertex in  $V_e$ together with its incident edges forms a \emph{hyperedge}. 
A \emph{hypermap} is an embedded hypergraph.

Figure~\ref{f.ex1a} shows a hypermap with $4$ hypervertices (the black vertices) and $3$ hyperedges (the white vertices) embedded in the sphere. Here $V_v=\{v_1,v_2,v_3,v_4\}$, 
 $V_e=\{e_1,e_2,e_3\}$ and the faces are labelled $f_1, \ldots , f_4$. 

\begin{figure}[ht!]
     \centering
        \hfill
        \begin{subfigure}[c]{0.45\textwidth}
        \centering
         \labellist
        \small\hair 2pt
        \pinlabel $v_1$ at 23 127
        \pinlabel $v_2$ at 129 126
        \pinlabel $v_3$ at 160 83
        \pinlabel $v_4$ at 77 30
        \pinlabel $e_1$ at 39 89
         \pinlabel $e_2$ at 75 167
          \pinlabel $e_3$ at 127 70
          \pinlabel $f_1$ at 70 130
        \pinlabel $f_3$ at 66 66
        \pinlabel $f_2$ at 95 87
        \pinlabel $f_4$ at 125 44
        \pinlabel $S^2$ at 120 173
        \endlabellist
\includegraphics[scale=0.8]{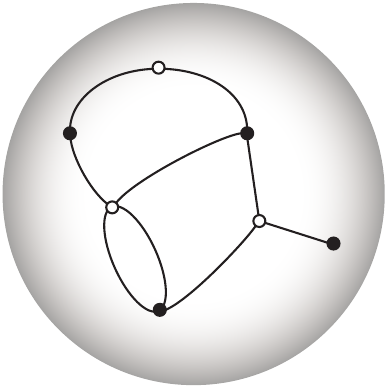}
        \caption{A hypermap in the sphere.}
        \label{f.ex1a}
     \end{subfigure}
        \hfill
        \begin{subfigure}[c]{0.45\textwidth}
        \centering
         \labellist
        \small\hair 2pt
        \pinlabel $v_1$ at  15 100
        \pinlabel $v_2$ at  101 100
        \pinlabel $v_3$ at  142 47
       \pinlabel $v_4$ at  58 15
       \pinlabel $e_2$ at  54 132
         \pinlabel $e_1$ at  37 65
          \pinlabel $e_3$ at  107 57
          \pinlabel $f_1$ at  53 106
        \pinlabel $f_3$ at  42 38
        \pinlabel $f_2$ at  79 62
        \pinlabel $f_4$ at  115 20
      \endlabellist
      \includegraphics[scale=0.9]{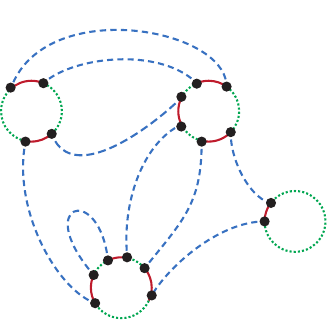}
      \vspace{5mm}
        \caption{The corresponding  gehm.}
        \label{f.ex1b}
     \end{subfigure}

      \begin{subfigure}[c]{0.45\textwidth}
        \centering
         \labellist
        \small\hair 2pt
        \pinlabel $S^2$ at 165 130 
      \endlabellist
      \includegraphics[scale=0.9]{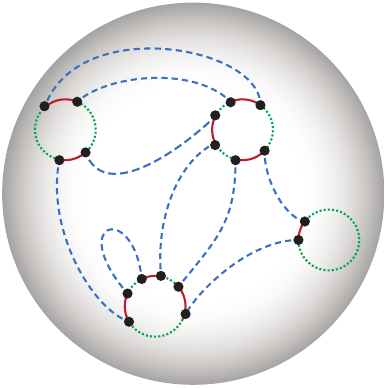}
        \caption{The natural embedding of the  gehm in the sphere. }
        \label{f.ex1c}
     \end{subfigure}
\caption{A hypermap, its gehm and the natural embedding of the gehm.}
\label{f.ex1}
\end{figure}

It is convenient to describe a hypermap as a \emph{graph encoded hypermap}, which we abbreviate as \emph{gehm}.  A gehm is a properly edge 3-coloured cubic graph in which edges are coloured from the set $\{b,g,r\}$ (standing for $\{\text{blue},\text{green},\text{red}\}$). In addition, we allow our gehms to have $g$-coloured edges which do not meet any vertices. We call these \emph{isolates}. They appear in the gehm as isolated $g$-coloured circles.
 If each $b$--$r$-cycle in a gehm has length exactly four then we say it is a \emph{graph encoded map} or \emph{gem} (and it then describes an embedded graph~\cite{zbMATH00824939,zbMATH03728291}).
Gehms describe hypermaps and vice versa, but we must develop some terminology before giving this correspondence. Figures~\ref{f.ex1a} and~\ref{f.ex1b} show, respectively, a hypermap and a gehm which we will see later correspond to each other. The labels on the gehm indicate the correspondence and may be ignored for the moment. As figures can be misleading, we emphasise that gehms are (abstract) graphs and are not embedded.

For readability in both colour and black and white printing, we adopt the following convention in our figures. We use blue dashed lines to denote $b$-edges, green dotted lines to denote $g$-edges, and red solid lines to denote $r$-edges.

We set up the following terminology, trusting that the rationale for the names will become clear.
\begin{terminology}\label{term1}
~
\begin{itemize}
\item A \emph{gehm-edge} is an edge of the cubic graph that forms the gehm.
\item A  \emph{gehm-vertex} is a vertex of the cubic graph that forms the gehm.
\item A $b$--$r$-cycle in the gehm is called a  \emph{hyperedge}: it represents an edge of the hypermap. 
\item A $b$--$g$-cycle in the gehm is called a \emph{hyperface}: it represents a face of the hypermap. 
\item A $g$--$r$-cycle in the gehm is called a \emph{hypervertex}: it represents a vertex of the hypermap.
\item An \emph{isolate} is both a \emph{hyperface} and a \emph{hypervertex}: it represents a component of a hypermap that consists of an isolated vertex embedded in the sphere.
\item $E(\bH)$, $V(\bH)$ and $F(\bH)$ are, respectively, the sets of hyperedges, hypervertices and hyperfaces of the gehm $\bH$, and $e(\bH)=|E(\bH)|$, $v(\bH)=|V(\bH)|$ and $f(\bH)=|F(\bH)|$. Note that both $V(\bH)$ and $F(\bH)$ include all the isolates.
\item $k(\bH)$ denotes the number of components of the gehm $\bH$: each represents a component of the hypermap. Isolates contribute to $k(\bH)$.
\item The \emph{degree} $d(e)$ of a hyperedge $e$ is half the number of edges in its $b$--$r$ cycle, and this coincides with the degree of the corresponding vertex in the bipartite graph $G=(V_v\sqcup V_e ,E)$ whose embedding gives the hypermap.  We write $d(\bH)=\sum_{e\in E(\bH)} d(e)$. Note that $d(\bH)$ is equal to the number of $r$-edges, the number of $b$-edges and the number of $g$-edges after excluding isolates. 
\item  A gehm is \emph{orientable} if it is bipartite, in which case an \emph{orientation} is a choice of vertex 2-colouring. If a gehm is not bipartite then it is \emph{non-orientable}. 
\item The \emph{Euler genus}, $\gamma(\bH)$, is defined through \emph{Euler's formula}  
\[
\gamma(\bH)=2k(\bH)-v(\bH)-e(\bH)+d(\bH)-f(\bH).\] 
\item The \emph{genus} of $\bH$ is $\gamma(\bH)$ when  $\bH$ is non-orientable, and is $\gamma(\bH)/2$ when $\bH$ is orientable.
\end{itemize}
\end{terminology}

For example, consider the gehm in Figure~\ref{fexgehm}. It has nine gehm-edges and six   gehm-vertices which are labelled $1, \ldots ,6$. It has two hypervertices given by the $g$--$r$-cycles $1\,2\,1$ and $3\,4\,5\,6 \,3$; one hyperedge of degree 3 given by the $b$--$r$-cycle $1\,6\,3\,4\,5\,2\,1$; and two hyperfaces given by the $b$--$g$-cycles $3\,4\,3$ and $1\,6\,5\,2\,1$. The gehm is orientable and has genus zero.

\begin{figure}[ht!]
\centering   
   \labellist
        \small\hair 2pt
        \pinlabel $1$ at 27 66  
        \pinlabel $2$ at   27 26
        \pinlabel $3$ at  117 65
       \pinlabel $4$ at   117 32
       \pinlabel $5$ at   148 39
         \pinlabel $6$ at   148 58
      \endlabellist
      \includegraphics[scale=1]{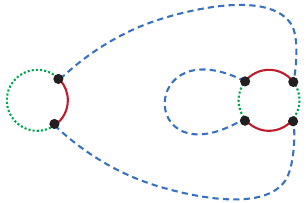}
\caption{An example of a gehm.}
\label{fexgehm}
\end{figure}

The gehm in Figure~\ref{f.ex1b} has four hypervertices, three hyperedges and four hyperfaces. The degrees of its hyperedges are $2$, $3$ and $4$. It has Euler genus $0$ and is orientable.
The labels $e_i$, $f_i$, and $v_i$ indicate the coloured cycles corresponding to the hyperedges, hyperfaces and hypervertices. 
We emphasize the gehm is not embedded, so the labels refer to cycles in the gehm not faces in the drawing on the page.

Two gehms are \emph{equivalent} if there is an isomorphism between them that preserves the edge-colouring. If the gehms are oriented the isomorphism should also preserve the vertex 2-colouring. 

\medskip
Each gehm has a natural embedding in a surface, as follows. 
\begin{construction} \label{makehypermap} 
We construct a complex from a gehm.  Ignore isolates for the moment.  Consider the gehm as a 1-complex with vertices giving the 0-simplices and edges the 1-simplices (following the standard construction for graphs).
Take one disc for each hypervertex ($g$--$r$-cycle) and identify the boundary of this disc with the hypervertex. Do the same for each hyperedge and hyperface.

Finally, consider each isolate as a copy of $S^1$ and embed each in a distinct sphere. (We think of the two hemispheres as corresponding to a vertex and a face.) 
Thus we have obtained a cellular embedding of the gehm in a surface. 
We call this the \emph{natural embedding} of a gehm. The 2-cells of this natural embedding correspond to hypervertices, hyperedges and hyperfaces of the gehm.
\end{construction}

The natural embedding of the gehm in Figure~\ref{f.ex1b} is shown in Figure~\ref{f.ex1c}. 

From the natural embedding of a gehm we can obtain a hypermap in the obvious way by placing one vertex in each hypervertex-disc to get $V_v$,  and one vertex in each hyperedge-disc to get $V_e$. For each intersection between a hypervertex-disc and a hyperedge-disc embed an edge between the corresponding vertices in the usual way (the edges should not intersect themselves or each other). For an isolate place one vertex of $V_v$ in the sphere.
This is clearly reversible and when combined with Construction~\ref{makehypermap} gives a correspondence between hypermaps and gehms. 
Note that this correspondence draws the natural embedding of the gehm and its corresponding hypermap in the same surface.  Thus, a hypermap and the natural embedding of its corresponding gehm are always in homeomorphic surfaces.

All of the parameters and terminology given in Terminology~\ref{term1} align with their  standard hypermap usage. The only terms that perhaps require some comment are Euler genus  and orientability.   
Let $\bH$ be a gehm corresponding to a hypermap $G$ given by the bipartite graph $(V_v\sqcup V_e ,E)$  embedded in a surface $\Sigma$. 
The Euler genus $\gamma(G)$ of $G$ is the Euler genus of $\Sigma$.  The Euler genus of a disconnected surface is the sum of the Euler genera of its components. We can disregard isolates as they do not contribute to the Euler genus of either $\bH$ or $G$. We see that the definition of $\gamma(\bH)$ is consistent with $\gamma(G)$ as follows.  By Euler's formula, $\gamma(G) = 2k(G)-(|V_v| + |V_e|) + e(G) -f(G) $.  However, $|V_v| =v(\bH)$ and $|V_e| =e(\bH)$, while $k(G) = k(\bH)$ and $f(G) = f(\bH)$, and finally $e(G) = d(\bH)$.  With this, $\gamma(\bH) = \gamma(G)$, and this common value is also the Euler genus of the surface created in constructing the natural embedding of the gehm.

For orientability, let $\bH$ be a gehm, $G$ be the corresponding hypermap, and $\bG$ this hypermap described as a gem. (Recall that $\bG$ is an embedded bipartite graph, so may be described by a gem, that is, a gehm in which $d(e)=2$ for every hyperedge $e$.)
By considering how $\bG$ can be obtained directly from $\bH$ it is clear that $\bH$ is bipartite if and only if $\bG$ is. Then by a standard result about gems (see e.g.,~\cite[Theorem~4.3]{zbMATH00824939}) $\bG$ is bipartite if and only if $G$ is orientable and it follows that $\bH$ is orientable if and only if $G$ is.

It follows from the correspondence and standard properties of the Euler genus of a surface that $\gamma(\bH)\geq 0$ and if $\bH$ is orientable then $\gamma(\bH)$ is even.

\section{A Tutte polynomial for hypermaps}

\subsection{Duality and minors}

There are six ways to permute the colours of the edges of a gehm, each of which corresponds to a natural duality or triality operation.
\begin{definition}\label{def:duality}
Let $\bH$ be a gehm, and $\mu$ be a permutation on the set $\{b,g,r\}$. 
Then we use  $\bH^{\mu}$ to denote the gehm obtained from $\bH$ by, for each $c\in\{b,g,r\}$ changing all $c$-coloured edges to $\mu(c)$ coloured edges.  
\end{definition}

If $\mu$ is of order 2, then $\bH^{\mu}$ is said to be a \emph{dual}. In particular, $\bH^{(br)}$ is the usual geometric dual, denoted by $\bH^*$, which interchanges the faces and vertices of a  hypermap. 
The gehm $\bH^{(bgr)}$ is the \emph{trial} of $\bH$, introduced by Tutte in~\cite{Tu73}.
In this paper we focus on geometric duals: a fuller study of duality (and minors) can be found in~\cite{referencenotfound}.

\begin{definition}\label{def:pardual}
Let $\bH$ be a gehm, and let $e$ be a hyperedge (i.e., a $b$--$r$-cycle).  The \emph{partial dual}   $\bH^e$ is the gehm obtained by interchanging the colours of the gehm-edges in the cycle $e$.
\end{definition}

Figures~\ref{f.ex2a} and~\ref{f.ex2b} show, respectively, the dual and the partial dual with respect to the degree four hyperedge of the gehm from Figure~\ref{f.ex20cex2}. 

Notice that in moving from Figure~\ref{f.ex20cex2} to Figure~\ref{f.ex2a}, the colours $b$ and $r$ are swapped, so every $b$--$g$ cycle becomes a $g$--$r$ cycle and vice versa. Thus every hyperface becomes a hypervertex and vice versa. 
While $\bH$ and $\bH^*$ in the figure are of genus zero, the partial dual $\bH^e$ is of genus two. This can be verified by using Euler's formula.

\begin{figure}[ht!]
     \centering
    
        \begin{subfigure}[c]{0.45\textwidth}
        \centering
    \includegraphics[scale=0.9]{f2}
        \caption{The gehm $\bH $.}
        \label{f.ex20cex2}
     \end{subfigure}

        \begin{subfigure}[c]{0.45\textwidth}
        \centering
    \includegraphics[scale=0.9]{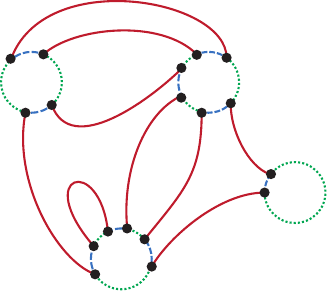}
        \caption{The dual $\bH^* $.}
        \label{f.ex2a}
     \end{subfigure}
    \hfill
        \begin{subfigure}[c]{0.45\textwidth}
        \centering
\includegraphics[scale=0.9]{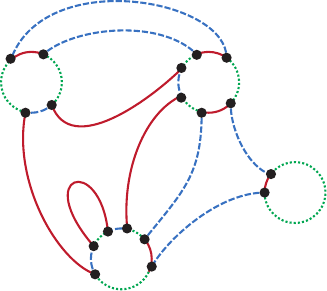}
        \caption{The partial dual $\bH^e $ where $e$ is the degree four edge.}
        \label{f.ex2b}
     \end{subfigure}
\caption{The dual and a partial dual of a gehm $\bH$. }
\label{f.ex2}
\end{figure}

For distinct hyperedges $e$ and $f$, it is straightforward to check that $(\bH^e)^f=(\bH^f)^e$. Thus we may unambiguously extend the definition of partial duality to sets of hyperedges. For a set $A$ of hyperedges, the partial dual $\bH^A$ is defined to be the result of computing the partial dual with respect to each edge of $A$ in any order.

Partial duals for hypermaps were introduced independently in~\cite{CV22,ben}. We only consider partial duals of hyperedges here, however, as in~\cite{CV22}, this definition is easily extended to hypervertex and hyperface partial duals.

\medskip

We next consider operations of deletion and contraction.
Because a hyperedge can in general be incident with many hypervertices, there are many possible definitions of deletion.  Below, we use one of the coarsest possible  definitions, and remove the entire hyperedge  without removing its incident hypervertices.    This is sometimes called `weak hyperedge deletion', in contrast to `strong hyperedge deletion', which deletes the incident hypervertices   as well.  There are also other hyperedge deletion models.  For example, in~\cite{CHpreHyperDelCon} 
the cyclic order of the hypervertices about the hyperedge $e$ in the embedding is used
to define a non-crossing partition of the hypervertices incident with $e$ and replace $e$ with multiple smaller hyperedges corresponding to the blocks of the partition. We shall discuss this further in  Subsection~\ref{CHpoly}. 

A common method of contracting the hyperedges of a hypergraph  is often described as identifying a hyperedge and its incident hypervertices to form a new hypervertex. This agrees with our definition of hyperedge contraction in hypermaps below provided that all the hypervertices are distinct.  If a hyperedge in a hypermap is incident with the same hypervertex multiple times, this does not hold, and multiple hypervertices may be created.  (This is also  what happens when contracting an orientable loop in a ribbon graph~\cite{zbMATH06151556}.)

Let $v$ be a vertex of degree two in a graph. By \emph{suppressing} $v$, we mean the following operation. If the only edge incident with $v$ is a loop, then replace $v$ and its incident edge with an isolated edge not adjacent to any vertex (in what follows this edge will be an isolate). Otherwise contract one of the edges incident with $v$.

\begin{definition}\label{def:delcon}
Let $\bH$ be a gehm, and let $e$ be a hyperedge (i.e., a $b$--$r$-cycle) then 
\begin{enumerate}
    \item
$\bH$ \emph{delete} $e$, denoted by $\bH\ba e$ is the gehm obtained from $\bH$ by deleting the $b$-gehm-edges in the $b$--$r$-cycle $e$, contracting the $r$-gehm-edges in the $b$--$r$-cycle $e$ and then suppressing the resulting vertices of degree two;
    \item
$\bH$ \emph{contract} $e$, denoted by $\bH \con e$ is the gehm obtained from $\bH$ by deleting the $r$-gehm-edges in the $b$--$r$-cycle $e$, contracting the $b$-gehm-edges in the $b$--$r$-cycle $e$ and then suppressing the resulting vertices of degree two.
\end{enumerate}
\end{definition}

Figure~\ref{f.ex3a} shows the effect of deleting the edge with degree three from the gehm in Figure~\ref{f.ex3newa};
Figure~\ref{f.ex3b} shows the effect of contracting the edge with degree four from the gehm in Figure~\ref{f.ex3newa}. Notice that in this example, both deletion and contraction create an additional component, here an isolate.  Also, while the diagrams in Figures~\ref{f.ex3newa} and~\ref{f.ex3a} coincide with natural embeddings of the gehm in one sphere (Figure~\ref{f.ex3newa}) or two spheres (Figure~\ref{f.ex3a}), the diagram in Figure~\ref{f.ex3b} does not. In a natural embedding, each isolate is in a separate spherical component.

\begin{figure}[ht!]
     \centering
   \begin{subfigure}[c]{0.45\textwidth}
        \centering
 \includegraphics[scale=0.9]{f2}
        \caption{The gehm $\bH $.}
        \label{f.ex3newa}
     \end{subfigure}
  
        \begin{subfigure}[c]{0.45\textwidth}
        \centering
\centering
\includegraphics[scale=0.9]{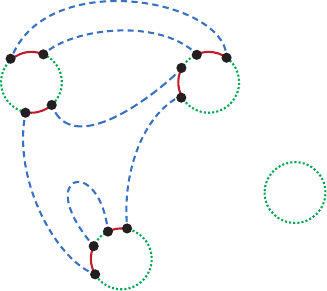}
        \caption{$\bH\ba e$ where $e$ is the degree three hyperedge.}
        \label{f.ex3a}
     \end{subfigure}
\hfill
\begin{subfigure}[c]{0.45\textwidth}
        \centering
  \includegraphics[scale=0.9]{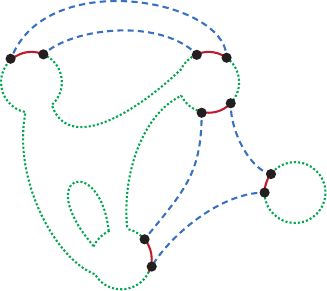}        \caption{$\bH\con f$ where $f$ is the degree four hyperedge.}
        \label{f.ex3b}
     \end{subfigure}
\caption{Deleting and contracting a hyperedge in a gehm $\bH$.}
\label{f.ex3}
\end{figure}

The following lemma is straightforward.
\begin{lemma}\label{lem:delconeasy}
Let $\bH$ be a gehm, and let $e$ and $f$ be distinct hyperedges. Then the following hold.
\begin{enumerate}
    \item $\bH^e\ba e=\bH\con e$;
    \item $(\bH \ba e) \ba f = (\bH \ba f) \ba e$;
    \item $(\bH \con e) \con f = (\bH \con f) \con e$;
    \item $(\bH \con e) \ba f = (\bH \ba f) \con e$.
    \item If $\bH$ is orientable, then so are both $\bH\ba e$ and $\bH\con e$.
\end{enumerate}
\end{lemma}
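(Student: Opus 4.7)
The plan is to dispatch the five parts separately. Parts (1)--(4) follow from a careful unpacking of the definitions, exploiting the fact that in a properly edge 3-coloured cubic graph each gehm-vertex lies on a unique $b$--$r$ cycle, so distinct hyperedges are automatically vertex-disjoint. For (1), I would observe that $\bH^e$ is obtained from $\bH$ simply by swapping the $b$ and $r$ colours on the gehm-edges of the cycle $e$. Under this swap, the instructions ``delete $b$-gehm-edges and contract $r$-gehm-edges of $e$'' applied to $\bH^e$ become ``delete $r$-gehm-edges and contract $b$-gehm-edges of $e$'' applied to $\bH$, which is exactly the operation $\con e$; the suppression steps are identical, so the two gehms coincide.

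For (2)--(4), I would use vertex-disjointness to argue that the $b$- and $r$-gehm-edges of $e$ and $f$, together with the gehm-vertices on $e$ and $f$, form disjoint sets, so the underlying gehm-edge deletions and contractions clearly commute. The only subtle point is that a $g$-edge may have one endpoint on $e$ and the other on $f$. I would track such a $g$-edge through the suppression step and check that the operation on $e$ replaces its endpoint on $e$ with the $g$-neighbour of the $r$- or $b$-partner of that endpoint, entirely independently of the analogous operation at the $f$-endpoint. Hence performing the two operations in either order produces the same pair of reattachments, and the final gehms agree.

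The most interesting part is (5). Orientability of $\bH$ is equivalent to bipartiteness of the underlying cubic graph, so I would fix a bipartition $(A,B)$ of $\bH$. Edge deletion trivially preserves bipartiteness, so the only step to check is the contraction-and-suppression. The cycle $e$ has $2d(e)$ gehm-vertices alternating between $A$ and $B$. For $\bH\ba e$, each $r$-gehm-edge of $e$ joins an $A$-vertex to a $B$-vertex, so the $g$-neighbours of these two endpoints lie in $B$ and $A$ respectively; after contraction and suppression, the new $g$-edge joins $B$ to $A$, preserving the bipartition. The case of $\bH\con e$ is identical with $b$-gehm-edges in place of $r$-gehm-edges. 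Any isolates produced by $g$-loops carry no vertices, so they do not disturb the bipartition, giving orientability of both $\bH\ba e$ and $\bH\con e$.
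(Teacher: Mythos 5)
Your proof is correct and supplies exactly the routine verification that the paper omits (the paper states only that the lemma is straightforward and gives no proof). The one place deserving slightly more care is part (5): a $g$-neighbour of a vertex of $e$ may itself lie on $e$, so a new $g$-edge can arise from suppressing a longer alternating $g$--$r$ path rather than a single $r$-edge with its two flanking $g$-edges; but any such path has odd length, so its endpoints still lie in opposite parts of the bipartition and your parity argument goes through unchanged.
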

It follows that we may carry out a sequence of deletion and contraction operations in any order without affecting the result. In particular, for a set $A$ of hyperedges we may unambiguously define $\bH\ba A$ to be the result of deleting all the edges in $A$. We let $\bH_{|A}= \bH \ba (E(\bH)-A)$, $v(A)=v(\bH_{|A})=v(\bH)$, $k(A)=k(\bH_{|A})$, $e(A)=e(\bH_{|A})=|A|$,  
$f(A)=f(\bH_{|A})$ and $\gamma(A)=\gamma(\bH_{|A})$.
Finally, we let $d(A)=d(\bH_{|A})=\sum_{e\in A}d(e)$.  

\medskip

Hyperedge deletion for gehms and hypermaps correspond with each other.  A description of deletion for maps can be found in, for example,~\cite{zbMATH07395405,iainmatrixannals}. It acts as follows.
If $G$ is a map with an edge $e$ then $G\ba e$ is obtained by removing the edge $e$ from the map together with its adjacent  face or faces. This gives a surface with one boundary component. Next cap off the hole by identifying its boundary with the boundary of a disc, resulting in a map. (For readers familiar with ribbon graphs, this corresponds exactly to deleting an edge of a ribbon graph.)

Let $\bH$ be a gehm and $G$ be its corresponding hypermap (i.e., embedded bipartite graph). Suppose $e$ is a hyperedge of   $\bH$ and $v_e$ its corresponding vertex in $G$. Then the hypermap corresponding to $\bH\ba e$ is obtained by deleting all the edges incident with $v_e$ then deleting $v_e$ including the sphere it is embedded in.

\subsection{Defining the polynomials}

In this section we introduce a Tutte polynomial for hypermaps. This polynomial is a direct generalisation of the well-studied Tutte polynomial for maps (which is also known as the ribbon graph polynomial)~\cite{zbMATH01801590,zbMATH07635160,zbMATH07395405,zbMATH06832600,iainmatrixannals}. 
Although our polynomial is naturally defined in terms of deletion-contraction relations, it is more convenient to begin with an analogue of the dichromatic polynomial.  This immediately generalizes to a multivariate version that facilitates partial duality identities which lead in turn to full duality formulas.  A standard argument, albeit using hypermap properties, shows that the hypermap dichromatic polynomial has a deletion-contraction reduction. Our analogue of the dichromatic polynomial then leads to a hypermap analogue of the Tutte polynomial.

\subsubsection{A hypergraph dichromatic polynomial}

\begin{definition}\label{def:ovdichrom}
The \emph{dichromatic polynomial} $Z(\bH;u,v)$ of a gehm $\bH$ is defined as follows:
\[ Z(\bH;u,v)= \sum_{A\subseteq E(\bH)} u^{d(A)-|A|} v^{f(A)}.\]
\end{definition}

It is straightforward to extend the dichromatic polynomial to a multivariate version, which will enable us to easily establish a duality relation. In the multivariate polynomial, the variable $u$ is replaced by a family of commuting variables $\mathbf u:=\{u_e\}_{e\in E(\bH)}$ indexed by the hyperedges of $\bH$.

\begin{definition}\label{def:mvdichrom}
The \emph{multivariate dichromatic polynomial} $Z(\bH;\mathbf u,v)$ of a gehm $\bH$ is defined as follows:
\[ Z(\bH;\mathbf u,v)= \sum_{A\subseteq E(\bH)} \Big(\prod_{e\in A}u_e^{d(e)-1}\Big)\; v^{f(A)}.\]
\end{definition}

The multivariate dichromatic polynomial satisfies the following recurrence relation. For this we note that a gehm $\bH$ with no hyperedges comprises $v(\bH)=f(\bH)$ isolates. 
\begin{lemma}\label{lem:multidichromdelcon}
Let $\bH$ be a gehm. Then $Z(\bH;\mathbf u,v) = v^{f(\bH)}$ if $\bH$ has no hyperedges and otherwise for each hyperedge $e$,
\[ Z(\bH;\mathbf u,v) = 
Z(\bH\ba e;\{u_e\}_{e\in E(\bH\ba e)},v) + u_e^{d(e)-1}Z(\bH\con e;\{u_e\}_{e\in E(\bH\con e)},v) .\]
\end{lemma}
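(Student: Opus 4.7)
The plan is to split the sum defining $Z(\bH;\mathbf u,v)$ according to whether the distinguished hyperedge $e$ lies in the summation index $A\subseteq E(\bH)$, and identify the two resulting partial sums with the two terms on the right-hand side. The base case is immediate: when $E(\bH)=\emptyset$ the only summand is $A=\emptyset$, the product $\prod_{e\in A}u_e^{d(e)-1}$ is empty, and $\bH_{|\emptyset}=\bH$, so $Z(\bH;\mathbf u,v)=v^{f(\bH)}$.

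For the recursive case, first consider subsets $A\not\ni e$. Such $A$ lie in $E(\bH\ba e)$, and by Lemma~\ref{lem:delconeasy}(2) the deletion operations commute, giving $\bH_{|A}=(\bH\ba e)_{|A}$, so $f(A)$ is the same whether computed in $\bH$ or in $\bH\ba e$; moreover the degrees $d(e')$ for $e'\in A$ are unchanged under deletion of $e$ since distinct hyperedges correspond to disjoint $b$--$r$ cycles in the cubic graph. Hence the sum over $A\not\ni e$ equals $Z(\bH\ba e;\{u_{e'}\}_{e'\in E(\bH\ba e)},v)$.

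For subsets $A\ni e$, write $A=B\cup\{e\}$ with $B\subseteq E(\bH\con e)$. The contribution to $Z(\bH;\mathbf u,v)$ factors as $u_e^{d(e)-1}\prod_{e'\in B}u_{e'}^{d(e')-1}v^{f(A)}$, and by Lemma~\ref{lem:delconeasy}(4) deletion and contraction on distinct hyperedges commute, yielding $(\bH\con e)_{|B}=\bH_{|A}\con e$. The degrees $d(e')$ for $e'\in B$ are again preserved, since the $b$--$r$ cycle of $e$ shares no gehm-vertex with the $b$--$r$ cycle of any other hyperedge. What remains, and is the crux of the lemma, is to establish that contraction of a hyperedge preserves the number of hyperfaces, i.e.\ $f(\bH_{|A})=f(\bH_{|A}\con e)$.

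To prove this last claim I would argue locally on the gehm. At each pair of adjacent cycle-vertices $u_i,v_i$ joined by a $b$-gehm-edge $b_i$ of the $b$--$r$ cycle $e$, the contraction deletes the two $r$-edges at $u_i$ and $v_i$, contracts $b_i$ to produce a degree-two vertex whose two incidences are the $g$-edges $g_{u_i}$ and $g_{v_i}$, and then suppresses it—fusing $g_{u_i}$ and $g_{v_i}$ into a single $g$-edge, or producing an isolate precisely when $g_{u_i}=g_{v_i}$ was already a loop. Each $b$--$g$ cycle of $\bH$ thus maps to a $b$--$g$ cycle of $\bH\con e$ by replacing every occurrence of the stretch $g_{u_i}\,b_i\,g_{v_i}$ with the fused $g$-edge (and shrinking a length-two $b$--$g$ cycle $u_i b_i v_i g u_i$ to an isolate); distinct $b$--$g$ cycles map to distinct images, and every $b$--$g$ cycle or isolate of $\bH\con e$ arises in this way, giving the required bijection. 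The main obstacle is carrying out this local bookkeeping cleanly, particularly the degenerate cases where multiple cycle-vertices of $e$ lie on a single hyperface or where a $g$-edge at one cycle-vertex coincides with a $g$-edge at another, but since every case yields one hyperface of $\bH\con e$ for each hyperface of $\bH$, the count is preserved and the recurrence follows.
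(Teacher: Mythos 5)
Your proposal is correct and follows essentially the same route as the paper: split the subset sum according to whether $e\in A$, with the whole weight of the argument resting on the identity $f(A\cup\{e\})=f((\bH\con e)_{|A})$, which is exactly the ``key observation'' the paper's one-line proof singles out. The only difference is that you actually supply the local gehm bookkeeping (the bijection between $b$--$g$ cycles before and after contracting $e$, including the degenerate isolate cases) that the paper dismisses as standard, and that part of your argument is sound.
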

\begin{proof}
The argument is very standard so we spare the reader the details beyond noting the key observation that for every subset $A$ of $E(\bH)-\{e\}$, we have $f(A\cup \{e\})= f((\bH \con e)_{|A})$.    
\end{proof}

From this we immediately deduce the following.
\begin{corollary}\label{cor:dichromdelcon}
Let $\bH$ be a gehm. 
Then $Z(\bH;u,v) = v^{f(\bH)}$ if $\bH$ has no hyperedges and otherwise for each hyperedge $e$,
\[Z(\bH;u,v) = 
Z(\bH\ba e;u,v) + u^{d(e)-1} Z(\bH\con e;u,v).\]
\end{corollary}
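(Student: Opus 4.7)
The plan is to derive the corollary as a direct specialization of Lemma \ref{lem:multidichromdelcon}, collapsing the family $\{u_e\}_{e \in E(\bH)}$ of commuting variables to a single variable by setting every $u_e = u$. The main observation is that under this substitution the multivariate polynomial collapses to the single-variable one: each monomial $\prod_{e \in A} u_e^{d(e)-1}$ becomes $u^{\sum_{e \in A}(d(e)-1)} = u^{d(A) - |A|}$, which matches the weight in Definition \ref{def:ovdichrom}. Hence $Z(\bH; \mathbf{u}, v)$, with every $u_e$ set equal to $u$, equals $Z(\bH; u, v)$.

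With this identification in hand, I would apply the substitution $u_e = u$ (for all $e$) to both sides of the recurrence in Lemma \ref{lem:multidichromdelcon}. The left-hand side becomes $Z(\bH; u, v)$. On the right-hand side, the two multivariate dichromatic polynomials of $\bH \ba e$ and $\bH \con e$ likewise collapse to their single-variable counterparts (the remaining variables all specialize to the same $u$), while the scalar coefficient $u_e^{d(e)-1}$ specializes to $u^{d(e)-1}$. The base case, where $\bH$ has no hyperedges, survives the substitution unchanged since the empty product contains no $u_e$ to specialize and $v^{f(\bH)}$ is independent of $u$.

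There is no real obstacle here; the argument is a transparent relabelling once the exponent-counting identity $\sum_{e \in A}(d(e)-1) = d(A) - |A|$ is noticed, and this is immediate from the definition $d(A) = \sum_{e \in A} d(e)$. If one preferred to avoid the multivariate machinery, a direct proof from Definition \ref{def:ovdichrom} is also available: partition the subsets $A \subseteq E(\bH)$ according to whether $e \in A$, use the key identity $f(A \cup \{e\}) = f((\bH \con e)_{|A})$ highlighted in the proof of Lemma \ref{lem:multidichromdelcon}, and factor $u^{d(e)-1}$ out of the sum over $A \ni e$. However, deducing it from the multivariate version as above is cleaner and is clearly the intended route given the word ``immediately'' in the statement.
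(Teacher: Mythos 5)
Your proposal is correct and follows exactly the paper's route: the corollary is stated immediately after Lemma~\ref{lem:multidichromdelcon} with the remark that it is ``immediately deduced'' from it, i.e., by setting every $u_e=u$ so that $\prod_{e\in A}u_e^{d(e)-1}$ collapses to $u^{d(A)-|A|}$. Your exponent-counting check and the remark on the base case are exactly the (unstated) details the paper leaves to the reader.
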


\subsubsection{Duality}

We first examine the effect of partial duality on $Z(\bH;\mathbf u,v)$.
Consider a gehm $\bH$ and a subset $X$ of its hyperedges. 
We  identify the edges of $\bH$ and $\bH^X$ in the natural way.
Given $\mathbf u = \{u_e\}_{e\in E(\bH)}$, we define $\mathbf u^X=\{u'_e\}_{e\in E(\bH^X)}$ by
\[  u'_e = \begin{cases} 1/u_e &\text{if $e\in X$,} \\ u_e& \text{if $e\notin X$.}\end{cases} \]

\begin{proposition}
Let $\bH$ be a gehm, and let $X$ be a subset of its hyperedges. Then
\[ Z(\bH;\mathbf u,v) = \Big(\prod_{e\in X} u^{d(e)-1}_e\Big) \, Z(\bH^X;\mathbf u^X,v).\]
\end{proposition}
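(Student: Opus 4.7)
The plan is to induct on $|X|$, with the base case $|X|=0$ trivial. The whole argument reduces to establishing the identity for $|X|=1$, after which commutativity of partial duality at distinct hyperedges lets us bootstrap to arbitrary $X$.

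For the single-hyperedge case $X=\{e\}$, I would expand both sides using the multivariate recurrence of Lemma~\ref{lem:multidichromdelcon}. Two observations keep the computation clean: partial duality only permutes colours within the $b$--$r$ cycle of $e$ and hence preserves hyperedge degree, so $d_{\bH^e}(e)=d(e)$; and after either $\ba e$ or $\con e$ the variable $u_e$ disappears, so $\mathbf u^{\{e\}}$ and $\mathbf u$ may be identified on the remaining edges. The recurrence applied to $Z(\bH^e;\mathbf u^{\{e\}},v)$ yields a deletion term together with a contraction term carrying the coefficient $u_e^{-(d(e)-1)}$; using Lemma~\ref{lem:delconeasy}(1), namely $\bH^e\ba e=\bH\con e$, together with the involutivity $(\bH^e)^e=\bH$ (which delivers the companion identity $\bH^e\con e=\bH\ba e$), multiplying by $u_e^{d(e)-1}$ matches exactly the expansion of $Z(\bH;\mathbf u,v)$.

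For the inductive step I would pick $f\in X$, set $X'=X\setminus\{f\}$, and chain the inductive hypothesis on $X'$ with the $|X|=1$ case applied to $\bH^{X'}$ at $f$. This works because partial duals at distinct hyperedges commute (their $b$--$r$ cycles are disjoint, so the colour swaps do not interfere), giving $(\bH^{X'})^f=\bH^X$ and $(\mathbf u^{X'})^{\{f\}}=\mathbf u^X$, and because $f\notin X'$ ensures $(\mathbf u^{X'})_f=u_f$, so the coefficient produced by the second step is exactly $u_f^{d(f)-1}$. Telescoping the two factors gives the claimed $\prod_{e\in X}u_e^{d(e)-1}$.

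The main obstacle is the bookkeeping in the $|X|=1$ step: keeping track of which variable family is attached to which gehm, and recognising that the natural identification $E(\bH)=E(\bH^e)$ together with $d_{\bH^e}(e)=d(e)$ makes the two applications of the recurrence compatible. A direct bijective argument pairing $A\subseteq E(\bH)$ with $A\triangle X\subseteq E(\bH^X)$ is also tempting (the powers of each $u_e$ match immediately), but the heart of that approach would still be proving $f_\bH(A)=f_{\bH^X}(A\triangle X)$, which seems to reduce to the same $|X|=1$ analysis; the inductive route via the recurrence is therefore the more economical path.
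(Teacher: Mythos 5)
Your proof is correct, and its overall architecture (reduce to the single-hyperedge case, then induct on $|X|$ using commutativity of partial duals at distinct hyperedges) matches the paper's. The difference is in how the case $X=\{e\}$ is handled. The paper works directly with the subset expansion: it splits the sum over $A\subseteq E(\bH)$ according to whether $e\in A$ and invokes the face identity $f(\bH_{|A})=f\bigl((\bH^e)_{|A\bigtriangleup\{e\}}\bigr)$ to re-index the sum as one over subsets of $E(\bH^e)$, after which the factor $u_e^{d(e)-1}$ can be pulled out. You instead apply the recurrence of Lemma~\ref{lem:multidichromdelcon} to both $Z(\bH;\mathbf u,v)$ and $Z(\bH^e;\mathbf u^{\{e\}},v)$ and match terms via $\bH^e\ba e=\bH\con e$ and its companion $\bH^e\con e=\bH\ba e$ (from involutivity of partial duality and Lemma~\ref{lem:delconeasy}(1)), together with the observations that $d_{\bH^e}(e)=d(e)$ and that $\mathbf u^{\{e\}}$ agrees with $\mathbf u$ off $e$. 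The two arguments rest on the same underlying combinatorial fact --- the deletion-contraction split of the state sum is exactly the split by membership of $e$, and the face identity is what makes Lemma~\ref{lem:multidichromdelcon} true --- but your packaging through the already-proven lemmas avoids restating the face identity and is arguably the more economical derivation; the paper's direct computation has the advantage of being self-contained at the level of the state sum and of exhibiting the bijection $A\mapsto A\bigtriangleup\{e\}$ explicitly, which you correctly identified as the alternative route.
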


\begin{proof}
For each hyperedge $e$ and subset $A$ of hyperedges of $\bH$, we have  $f(\bH_{|A})=f((\bH^e)_{|A\bigtriangleup \{e\}})$, so
\begin{align*} Z(\bH;\mathbf u,v) &= \sum_{A\subseteq E(\bH)-\{e\}} \Big(\prod_{h\in A}u^{d(h)-1}_h\Big)
( v^{f(A)} + u_e^{d(e)-1}v^{f(A\cup \{e\})})\\
&= \sum_{A\subseteq E(\bH^e)-\{e\}} \Big(\prod_{h\in A}u^{d(h)-1}_h\Big)
( v^{f((\bH^e)_{|A\cup \{e\}})} + u^{d(e)-1}_ev^{f((\bH^e)_{|A})})\\
&=u^{d(e)-1}_e \sum_{A\subseteq E(\bH^e)-\{e\}} \Big(\prod_{h\in A}u^{d(h)-1}_h\Big)
\Big( \frac{1}{u^{d(e)-1}_e} v^{f((\bH^e)_{|A\cup \{e\}})} + v^{f((\bH^e)_{|A})}\Big)
\\&= u^{d(e)-1}_e \,Z(\bH^e;\mathbf u^{\{e\}},v).\end{align*}
The result now follows by induction on $|X|$.
\end{proof}

From this we deduce the following.
\begin{corollary}\label{cor:dichromdual}
Let $\bH$ be a gehm. Then
\[ Z(\bH; u,v) = u^{d(\bH)-e(\bH)} \, Z(\bH^*;1/u,v). \]
\end{corollary}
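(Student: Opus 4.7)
The plan is to deduce the corollary as a direct specialization of the preceding proposition, taking $X$ to be the whole hyperedge set $E(\bH)$ and collapsing the multivariate version to the univariate one.

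First I would verify that $\bH^{E(\bH)} = \bH^*$. By Definition~\ref{def:pardual}, forming the partial dual with respect to a single hyperedge swaps the colours of the gehm-edges lying in the corresponding $b$--$r$-cycle. Doing this over all hyperedges swaps the colours of every $b$-edge and every $r$-edge in the gehm (since each such gehm-edge lies in exactly one $b$--$r$-cycle), while leaving $g$-edges untouched. This is precisely the permutation $(br)$, so by Definition~\ref{def:duality} we indeed have $\bH^{E(\bH)} = \bH^{(br)} = \bH^*$.

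Next I would apply the proposition with $X = E(\bH)$ and then specialize to $u_e = u$ for every hyperedge $e$. With this specialization, the partial-dual variables $u'_e$ from the definition of $\mathbf{u}^X$ all become $1/u$, so
\[ Z(\bH^X;\mathbf{u}^X,v) = Z(\bH^*;1/u,v). \]
The prefactor simplifies routinely:
\[ \prod_{e\in E(\bH)} u_e^{d(e)-1} \;=\; u^{\sum_{e\in E(\bH)}(d(e)-1)} \;=\; u^{d(\bH)-e(\bH)}, \]
using the definition $d(\bH)=\sum_{e\in E(\bH)} d(e)$ from Terminology~\ref{term1}. Combining these two facts yields the claimed identity.

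There is no real obstacle here; the only thing worth double-checking is the identification $\bH^{E(\bH)} = \bH^*$, which relies on the fact that every $b$- and $r$-gehm-edge belongs to exactly one $b$--$r$-cycle (a consequence of the edge $3$-colouring of the cubic graph). Once that is in hand the corollary is just the $X = E(\bH)$, $u_e \equiv u$ specialization of the proposition.
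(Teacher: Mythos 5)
Your proof is correct and follows exactly the route the paper intends: the paper states the corollary as an immediate deduction from the preceding proposition, obtained by taking $X=E(\bH)$ (so that $\bH^X=\bH^*$) and specializing all $u_e$ to $u$, which is precisely what you do. Your explicit check that $\bH^{E(\bH)}=\bH^{(br)}=\bH^*$ and the prefactor computation $\prod_e u^{d(e)-1}=u^{d(\bH)-e(\bH)}$ are both right.
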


\subsubsection{Translating to the associated hypergraph Tutte polynomial}

We wish to define an analogue of the Tutte polynomial for hypermaps. For this we take the approach described in~\cite{iainmatrixannals} where it is explained how   the classical connection between the dichromatic and Tutte polynomials of a graph (see, e.g.,~\cite{handbookch2}) can be used to derive a Tutte polynomial of maps. 
The three key properties that our Tutte polynomial for hypermaps, $T(\bH;x,y)$, should satisfy are: (1) it should be equivalent (up to change of variables and multiplication by simple prefactors) to the dichromatic polynomial; (2) it should satisfy the duality relation $T(\bH;x,y)=T(\bH^*;y,x)$; and (3) $T(\bH;x,y)$ should coincide with the Tutte polynomial of a map~\cite{zbMATH01801590,zbMATH07635160,zbMATH07395405,zbMATH06832600,iainmatrixannals} when $\bH$ is a gem, that is a hypermap in which each hyperedge has degree two, and hence represents a map.
For this we define
\[
\rho(\bH)= v(\bH)-k(\bH)+\tfrac{1}{2}\gamma(\bH),
\]
and
for a set $A$ of hyperedges of $\bH$ we let 
\begin{align*}
\rho(A)=\rho(\bH_{|A})&= v(A)-k(A)+\tfrac{1}{2}\gamma(A)
\\&=
\frac 12 (v(A)+d(A)-|A|-f(A)),
\end{align*}
where the last equality follows by Euler's Formula.
It follows immediately from properties of $\gamma$ that we have established that $\rho(A)\geq 0$, and if $\bH$ is orientable then $\rho(A)$ is integral for each subset of its edges (since $\gamma(A)$ must be even).

\bigskip

We can now introduce our  Tutte polynomial for hypermaps as a sum over sets of hyperedges. The reader will likely notice a striking similarity with the definition of the classical Tutte polynomial of a graph. As we shall see, this similarity is an important feature of the polynomial.
\begin{definition}\label{def:hmtutte} For a gehm $\bH$, we define its \emph{Tutte polynomial} by
\begin{equation} \label{eq:Tuttedef} T(\bH;x,y) = \sum_{A\subseteq E(\bH)} (x-1)^{\rho(\bH)-\rho(A)} (y-1)^{d(A)-|A|-\rho(A)}.\end{equation}
\end{definition}

\begin{proposition}
The Tutte polynomial is a polynomial in $\sqrt{x-1}$ and $\sqrt{y-1}$.
\end{proposition}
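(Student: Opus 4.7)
The plan is to verify that for every $A \subseteq E(\bH)$, both exponents appearing in Definition~\ref{def:hmtutte}, namely $\rho(\bH) - \rho(A)$ and $d(A) - |A| - \rho(A)$, are non-negative half-integers. Once this is established, setting $X=\sqrt{x-1}$ and $Y=\sqrt{y-1}$ turns each summand into a monomial $X^{2(\rho(\bH)-\rho(A))}Y^{2(d(A)-|A|-\rho(A))}$ with non-negative integer exponents, so $T(\bH;x,y)$ is visibly a polynomial in $X$ and $Y$.

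Half-integrality is immediate from the identity $\rho(B) = v(B) - k(B) + \tfrac{1}{2}\gamma(B)$: since $v(B)$, $k(B)$ and $\gamma(B)$ are integers, $\rho(B)$ is a half-integer for every subgehm, hence both exponents (which are $\mathbb{Z}$-linear combinations of such quantities) are half-integers.

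For non-negativity, a short calculation using Euler's formula inside $\bH_{|A}$ rewrites the exponent of $y-1$ as
\[
d(A)-|A|-\rho(A) \;=\; \tfrac{1}{2}\gamma(A) + \bigl(f(A)-k(A)\bigr),
\]
which is $\geq 0$ because $\gamma(A)\geq 0$ (noted in Section~\ref{sec:hyper}) and every component of a cellularly embedded hypermap contains at least one face (so $f(A)\geq k(A)$, with isolates contributing equally to both counts). Using $v(A)=v(\bH)$, the exponent of $x-1$ simplifies to
\[
\rho(\bH)-\rho(A) \;=\; \bigl(k(A)-k(\bH)\bigr) + \tfrac{1}{2}\bigl(\gamma(\bH)-\gamma(A)\bigr),
\]
whose first summand is non-negative because hyperedge deletion cannot decrease the number of components.

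The main obstacle is showing the second summand is non-negative, i.e. that $\gamma(\bH_{|A}) \leq \gamma(\bH)$. Iterating Lemma~\ref{lem:delconeasy}(2) reduces this to the single-edge inequality $\gamma(\bH\ba e) \leq \gamma(\bH)$, which I would justify topologically via the natural embedding: deleting a hyperedge corresponds to excising the star of its bipartite vertex $v_e$ (a disc neighbourhood together with its incident arcs) from the embedding surface and refilling each resulting boundary circle with a disc to recover a cellular embedding, a surgery which cannot raise Euler genus.
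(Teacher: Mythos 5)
Your proof is correct, and for the $(y-1)$ exponent it takes a genuinely different route from the paper. For the $(x-1)$ exponent both arguments are essentially the same: you both reduce to the facts that hyperedge deletion cannot decrease the number of components nor increase the Euler genus (the paper simply asserts the latter for maps; your surgery sketch via the natural embedding is the same fact with slightly more topological detail, at a comparable level of rigour). For the $(y-1)$ exponent, however, the paper writes $d(A)-|A|-\rho(A)$ as the nullity of the underlying bipartite graph minus $\tfrac{1}{2}\gamma(A)$ and then proves that the Euler genus of a map is at most twice its nullity by adding edges to a spanning tree one at a time; you instead use Euler's formula to rewrite the exponent as $\tfrac{1}{2}\gamma(A)+\bigl(f(A)-k(A)\bigr)$ and invoke only $\gamma(A)\geq 0$ (already established in Section~2) and $f(A)\geq k(A)$ (every component of a gehm contains a $b$--$g$ cycle, and isolates count as hyperfaces). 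Your version is more elementary and self-contained, needing no auxiliary bound relating genus to nullity; the paper's version has the side benefit of exhibiting the exponent as ``nullity minus half the genus,'' which ties it to the classical Tutte polynomial. You also make explicit the half-integrality of the exponents, which the paper leaves implicit; that is a worthwhile addition since it is what justifies the phrase ``polynomial in $\sqrt{x-1}$ and $\sqrt{y-1}$'' rather than merely ``non-negative exponents.''
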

\begin{proof}
Consider the hypermap (i.e., the embedded bipartite graph $G$) corresponding to the gehm $\bH$. As deleting edges in a map cannot increase genus or decrease the number of components, $ v(\bH)-k(\bH) \geq v(A)-k(A)$ and  $\gamma(\bH) \geq \gamma(A) $. Thus $\rho(\bH)-\rho(A)\geq 0$ and the $(x-1)$ exponent is non-negative. 

The $(y-1)$ exponent can be written as $d(A) - |A|-(v(A) - k(A)+\frac 12 \gamma(A))$. Here $d(A) - (|A|+v(A)) + k(A)$ is the nullity of the bipartite graph $G$ corresponding to $\bH_{|A}$. In any map $G$, the Euler genus $\gamma(G)$ cannot be greater than the twice the nullity. To see this  start with a spanning tree of each connected component of $G$ embedded in the sphere. The number of remaining edges is equal to the nullity and adding these edges one at a time increases $\gamma$ by at most two at each stage. Thus $d(A)-|A|-\rho(A)\geq0$. (A more sophisticated argument, for example by considering the homology generators, will  show $\gamma(G)$ cannot be greater than the the nullity.)
\end{proof}

Since when $\bH$ is orientable $\rho(A)$ is integral for each subset of edges $A$, the following holds. 

\begin{proposition}\label{prop:orienttpoly}
If $\bH$ is orientable, then $T(\bH;x,y)$ can be expanded as a polynomial in $x$ and $y$.
\end{proposition}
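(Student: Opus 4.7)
The plan is to combine the exponent non-negativity (already established in the previous proposition) with an integrality argument based on orientability. I want to show that each exponent appearing in the sum defining $T(\bH;x,y)$ is a non-negative integer when $\bH$ is orientable; once this is verified, every summand is a monomial in $x-1$ and $y-1$, hence expandable as a polynomial in $x$ and $y$, and the result follows.

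First I would argue that for every $A\subseteq E(\bH)$, the spanning subgehm $\bH_{|A}$ is orientable. By definition $\bH_{|A} = \bH\ba (E(\bH)-A)$, and Lemma~\ref{lem:delconeasy}(5) says orientability is preserved under hyperedge deletion, so iterating over the hyperedges not in $A$ gives that $\bH_{|A}$ is orientable. Consequently, by the final observation in Section~\ref{sec:hyper}, $\gamma(A)=\gamma(\bH_{|A})$ is even, so
\[
\rho(A) = v(A)-k(A)+\tfrac{1}{2}\gamma(A)
\]
is an integer. Applied to $A=E(\bH)$, this also gives that $\rho(\bH)$ is an integer.

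Next I would show that both exponents are integers. The exponent of $(x-1)$ is $\rho(\bH)-\rho(A)$, which is a difference of integers, hence an integer; the previous proposition already shows it is non-negative. The exponent of $(y-1)$ is $d(A)-|A|-\rho(A)$; the quantity $d(A)-|A|$ is plainly an integer, and $\rho(A)$ is an integer by the previous paragraph, so this exponent is also an integer, and the previous proposition again gives non-negativity.

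No step here is a serious obstacle; the content is entirely bookkeeping using already-established facts. The only point requiring care is ensuring that the orientability hypothesis on $\bH$ propagates to all spanning subgehms $\bH_{|A}$, which is exactly where Lemma~\ref{lem:delconeasy}(5) is needed. Once this is in place, $T(\bH;x,y)$ is a $\mathbb{Z}$-linear combination of monomials $(x-1)^{a}(y-1)^{b}$ with $a,b\in\mathbb{Z}_{\geq 0}$, and therefore expands as a polynomial in $x$ and $y$.
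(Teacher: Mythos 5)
Your proposal is correct and follows essentially the same route as the paper, which derives the result from the observation (stated just before the proposition) that orientability of $\bH$ makes $\gamma(A)$ even and hence $\rho(A)$ integral for every $A$, combined with the exponent non-negativity from the preceding proposition. You merely make explicit the step the paper leaves implicit, namely that orientability propagates to every $\bH_{|A}$ via Lemma~\ref{lem:delconeasy}(5).
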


The converse of Proposition~\ref{prop:orienttpoly} is false, as shown by the gehm shown in Figure~\ref{f.ex4a} which is non-orientable but has Tutte polynomial $x+y-2$.

\begin{figure}[ht!]
     \centering
        \hfill
        \begin{subfigure}[c]{0.45\textwidth}
        \centering
\centering
    \includegraphics[scale=1]{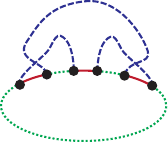}
        \caption{A nonorientable gehm.}
        \label{f.ex4a}
     \end{subfigure}
        \hfill
        \begin{subfigure}[c]{0.45\textwidth}
        \centering
  \includegraphics[scale=1]{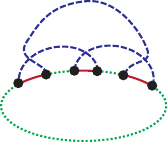}       
  \caption{An orientable gehm.}
        \label{f.ex4b}
     \end{subfigure}
\caption{Two gehms with the same Tutte polynomial  $x+y-2$.}
\label{f.ex4}
\end{figure}

By comparing the exponents of the corresponding terms in the sums expressing $T$ and $Z$ we easily obtain the following translation between the two functions.
\begin{proposition}\label{prop:tuttedichrom}
For a gehm $\bH$, 
\[ T(\bH;x+1,y+1) = {\sqrt x}^{d(\bH)-e(\bH)-f(\bH)} {\sqrt y}^{-v(\bH)} Z\Big(\bH;\frac{\sqrt y}{\sqrt x},\sqrt x \sqrt y\Big).\]
\end{proposition}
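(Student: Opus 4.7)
The plan is a direct term-by-term comparison of the sums defining $T(\bH; x+1, y+1)$ and the right-hand side, exploiting that both are indexed by the same family of subsets $A \subseteq E(\bH)$ and that $v(A) = v(\bH)$ for every $A$.

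First I would substitute $(x,y) \mapsto (x+1,y+1)$ into Definition~\ref{def:hmtutte} to obtain
\[ T(\bH;x+1,y+1) = \sum_{A\subseteq E(\bH)} x^{\rho(\bH)-\rho(A)}\, y^{d(A)-|A|-\rho(A)}, \]
and separately expand the right-hand side by substituting $u = \sqrt{y}/\sqrt{x}$ and $v = \sqrt{x}\sqrt{y}$ into Definition~\ref{def:ovdichrom}, yielding
\[ {\sqrt x}^{d(\bH)-e(\bH)-f(\bH)} {\sqrt y}^{-v(\bH)} \sum_{A} x^{(|A|-d(A)+f(A))/2}\, y^{(d(A)-|A|+f(A))/2}. \]
Pulling the prefactors inside the sum gives a single sum whose $A$-term has $x$-exponent $\tfrac{1}{2}(d(\bH)-e(\bH)-f(\bH)+|A|-d(A)+f(A))$ and $y$-exponent $\tfrac{1}{2}(-v(\bH)+d(A)-|A|+f(A))$.

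Next I would reduce both expressions to the same form using the identity
\[ \rho(A) = \tfrac{1}{2}(v(A)+d(A)-|A|-f(A)), \]
recorded just before the definition of $T$, together with the observation that $v(A) = v(\bH)$ (also noted in the excerpt, since hyperedge deletion does not affect hypervertices). Applied to $A$ and to the whole gehm this gives
\begin{align*}
\rho(\bH) - \rho(A) &= \tfrac{1}{2}\bigl(d(\bH)-e(\bH)-f(\bH)+|A|-d(A)+f(A)\bigr),\\
d(A) - |A| - \rho(A) &= \tfrac{1}{2}\bigl(d(A)-|A|-v(\bH)+f(A)\bigr),
\end{align*}
which matches the exponents computed on the right-hand side exactly.

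The verification is therefore purely algebraic; no induction or structural argument on $\bH$ is required. The only point needing care is keeping track of the three key facts $v(A) = v(\bH)$, the Euler-formula expression for $\rho$, and the definitions of $d(A)$ and $f(A)$ on subhypermaps. This is not really an obstacle, but it is the place where one must be careful with signs and factors of $\tfrac{1}{2}$.
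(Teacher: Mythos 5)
Your proof is correct and follows exactly the route the paper intends: the paper gives no detailed argument, stating only that the result follows ``by comparing the exponents of the corresponding terms in the sums expressing $T$ and $Z$,'' which is precisely the term-by-term exponent comparison you carry out using $\rho(A)=\tfrac{1}{2}(v(A)+d(A)-|A|-f(A))$ and $v(A)=v(\bH)$. Your exponent calculations check out on both sides, so this is a complete (and more explicit) version of the paper's own proof.
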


This enables us to obtain the following deletion-contraction recurrence and duality relation for $T$.
\begin{theorem}\label{thm:delcon}
    Let $\bH$ be a gehm. Then $T(\bH;x,y)=1$ if $\bH$ has no hyperedges and otherwise for each hyperedge $e$,
\begin{multline*} T(\bH;x,y) = {\sqrt {x-1}}^{\,f(\bH\ba e) - f(\bH)+d(e)-1}\, T(\bH\ba e; x,y) \\+ 
{\sqrt {y-1}}^{\,v(\bH\con e) - v(\bH)+d(e)-1} \,T(\bH\con e; x,y).
\end{multline*}
\end{theorem}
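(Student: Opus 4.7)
The plan is to derive the recurrence by pushing the deletion-contraction recurrence for the dichromatic polynomial (Corollary \ref{cor:dichromdelcon}) through the translation of Proposition \ref{prop:tuttedichrom}. The base case is immediate: when $E(\bH)=\emptyset$ the gehm is a disjoint union of isolates, so $v(\bH)=k(\bH)=f(\bH)$, $\gamma(\bH)=0$, and hence $\rho(\bH)=0$. The only subset is $A=\emptyset$, for which $d(A)=|A|=\rho(A)=0$, giving $T(\bH;x,y)=1$. For the inductive step, write $X=x-1$ and $Y=y-1$ so that Proposition \ref{prop:tuttedichrom} takes the form
\[
T(\bH;x,y)=\sqrt{X}^{\,d(\bH)-e(\bH)-f(\bH)}\sqrt{Y}^{-v(\bH)}\,Z\bigl(\bH;\sqrt{Y}/\sqrt{X},\sqrt{XY}\bigr).
\]
I would then apply Corollary \ref{cor:dichromdelcon} to split the right-hand $Z$-factor as $Z(\bH\ba e;\cdot)+(\sqrt{Y}/\sqrt{X})^{d(e)-1}Z(\bH\con e;\cdot)$, and invert Proposition \ref{prop:tuttedichrom} on each of $\bH\ba e$ and $\bH\con e$ to re-express those $Z$-values as scalar multiples of $T(\bH\ba e;x,y)$ and $T(\bH\con e;x,y)$.

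This substitution requires tabulating the behaviour of the parameters appearing in Proposition \ref{prop:tuttedichrom}. From the gehm-level definitions one reads off $d(\bH\ba e)=d(\bH\con e)=d(\bH)-d(e)$ and $e(\bH\ba e)=e(\bH\con e)=e(\bH)-1$, noting that distinct hyperedges share no gehm-edges and that deleting or contracting one hyperedge touches only its own $b$--$r$ cycle. The two additional preservation laws needed are $v(\bH\ba e)=v(\bH)$, already built into the paper's notation $v(A)=v(\bH_{|A})=v(\bH)$, and its geometric dual $f(\bH\con e)=f(\bH)$. The latter is not stated outright but follows immediately from the identity $(\bH\con e)^{*}=\bH^{*}\ba e$, which holds because contraction performs on the edges of $e$ the operations that deletion performs after the global colour swap defining $\bH^{*}$; then $f(\bH\con e)=v\bigl((\bH\con e)^{*}\bigr)=v(\bH^{*}\ba e)=v(\bH^{*})=f(\bH)$.

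With these identities in place, the remainder is arithmetic. In the coefficient of $T(\bH\ba e;x,y)$, the two $\sqrt{Y}$-contributions $\sqrt{Y}^{-v(\bH)}$ from the prefactor and $\sqrt{Y}^{v(\bH\ba e)}$ from inverting Proposition \ref{prop:tuttedichrom} cancel using $v(\bH\ba e)=v(\bH)$, and the $\sqrt{X}$-exponents collapse telescopically to $f(\bH\ba e)-f(\bH)+d(e)-1$. Symmetrically, in the coefficient of $T(\bH\con e;x,y)$ the extra factor $(\sqrt{Y}/\sqrt{X})^{d(e)-1}$ from Corollary \ref{cor:dichromdelcon} combines with the other prefactors so that all $\sqrt{X}$-contributions cancel via $f(\bH\con e)=f(\bH)$, leaving the $\sqrt{Y}$-exponent $v(\bH\con e)-v(\bH)+d(e)-1$. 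The main obstacle is not the algebra but cleanly verifying the conservation law $f(\bH\con e)=f(\bH)$: the duality identity $(\bH\con e)^{*}=\bH^{*}\ba e$ does the real work, and without it the two exponents in the statement would look mysteriously asymmetric rather than manifestly dual.
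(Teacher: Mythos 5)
Your proposal is correct and follows essentially the same route as the paper: translate $T$ to $Z$ via Proposition~\ref{prop:tuttedichrom}, apply the deletion--contraction recurrence of Corollary~\ref{cor:dichromdelcon}, translate back, and simplify the exponents using $v(\bH\ba e)=v(\bH)$ and $f(\bH\con e)=f(\bH)$. The only difference is that you spell out the base case and justify $f(\bH\con e)=f(\bH)$ via the identity $(\bH\con e)^{*}=\bH^{*}\ba e$, which the paper dispatches with the single word ``dually''; both of these added details are correct.
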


\begin{proof}
Using Proposition~\ref{prop:tuttedichrom}, Corollary~\ref{cor:dichromdelcon} and then Proposition~\ref{prop:tuttedichrom} again, we obtain
\begin{align*}
T(\bH;x+1,y+1) =& {\sqrt{x}}^{d(\bH)-e(\bH)-f(\bH)} {\sqrt y}^{-v(\bH)}
\,Z\Big(\bH;\tfrac{\sqrt y}{\sqrt x},\sqrt x \sqrt y\Big)\\
=& {\sqrt{x}}^{d(\bH)-e(\bH)-f(\bH)} {\sqrt y}^{-v(\bH)}
\Big( Z\Big(\bH\ba e; \tfrac{\sqrt y}{\sqrt x},\sqrt x \sqrt y\Big)\\ &\quad + \Big(\frac{\sqrt y}{\sqrt x}\Big)^{d(e)-1}  
\,Z\Big(\bH\con e; \tfrac{\sqrt y}{\sqrt x},\sqrt x \sqrt y\Big)\Big)\\
=& {\sqrt{x}}^{d(e)-1+f(\bH\ba e)-f(\bH)} {\sqrt y}^{v(\bH \ba e)-v(\bH)}
\,T(\bH\ba e;x+1,y+1) \\
&\quad+ {\sqrt{x}}^{f(\bH\con e)-f(\bH)} {\sqrt y}^{d(e)-1+v(\bH \con e)-v(\bH)}
\,T(\bH\con e;x+1,y+1).
\end{align*}
The result follows by noting that $v(\bH \ba e)=v(\bH)$ and dually that $f(\bH\con e)=f(\bH)$.
\end{proof}

\begin{proposition}\label{prop:tdual}
For a gehm $\bH$, 
\[ T({\bH}^*;x,y) = T(\bH;y,x).\]
\end{proposition}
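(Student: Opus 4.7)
The plan is to derive the duality identity by combining the two prior results already in place: the translation between $T$ and $Z$ from Proposition~\ref{prop:tuttedichrom}, and the dichromatic polynomial duality from Corollary~\ref{cor:dichromdual}. This is a standard-style proof, analogous to how one derives Tutte duality for graphs or for ribbon graphs from the corresponding Whitney rank/dichromatic identities.

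First, I would record how the basic parameters transform under geometric duality. Since $\bH^*$ is obtained from $\bH$ by swapping the colours $b$ and $r$, every $b$--$r$-cycle is preserved, so hyperedges (and their degrees) are preserved: $d(\bH^*) = d(\bH)$ and $e(\bH^*) = e(\bH)$. However $g$--$r$-cycles and $g$--$b$-cycles are interchanged, which gives $v(\bH^*) = f(\bH)$ and $f(\bH^*) = v(\bH)$. These are the only ingredients I need beyond the two cited results.

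Next, I would compute $T(\bH^*;y+1,x+1)$. Applying Proposition~\ref{prop:tuttedichrom} to $\bH^*$ with the roles of the two variables swapped produces
\[
T(\bH^*;y+1,x+1) = \sqrt{y}^{\,d(\bH^*)-e(\bH^*)-f(\bH^*)} \sqrt{x}^{\,-v(\bH^*)}\, Z\bigl(\bH^*;\tfrac{\sqrt{x}}{\sqrt{y}},\sqrt{x}\sqrt{y}\bigr).
\]
Using the parameter identifications above, the prefactor becomes $\sqrt{y}^{\,d(\bH)-e(\bH)-v(\bH)} \sqrt{x}^{\,-f(\bH)}$. I would then apply Corollary~\ref{cor:dichromdual} (rewritten as $Z(\bH^*;1/u,v) = u^{e(\bH)-d(\bH)}Z(\bH;u,v)$, with $u=\sqrt{y}/\sqrt{x}$) to convert $Z(\bH^*;\sqrt{x}/\sqrt{y},\sqrt{xy})$ into $(\sqrt{x}/\sqrt{y})^{d(\bH)-e(\bH)}\,Z(\bH;\sqrt{y}/\sqrt{x},\sqrt{xy})$.

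Finally I would multiply out and collect exponents: the $(\sqrt{x}/\sqrt{y})^{d(\bH)-e(\bH)}$ factor shifts the $\sqrt{x}$ exponent from $-f(\bH)$ up to $d(\bH)-e(\bH)-f(\bH)$ and the $\sqrt{y}$ exponent from $d(\bH)-e(\bH)-v(\bH)$ down to $-v(\bH)$, producing exactly the prefactor that Proposition~\ref{prop:tuttedichrom} assigns to $T(\bH;x+1,y+1)$. Hence $T(\bH^*;y+1,x+1) = T(\bH;x+1,y+1)$, and substituting $x\mapsto x-1$, $y\mapsto y-1$ yields the result. The only real obstacle is bookkeeping with the half-integer exponents, but the cancellation is clean because the transformations of $v,f,d,e$ under duality interact perfectly with the symmetry between $\sqrt{x}$ and $\sqrt{y}$ in the translation formula.
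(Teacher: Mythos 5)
Your proposal is correct and follows essentially the same route as the paper: both proofs combine Proposition~\ref{prop:tuttedichrom} with Corollary~\ref{cor:dichromdual} and the parameter identifications $v(\bH^*)=f(\bH)$, $f(\bH^*)=v(\bH)$, $d(\bH^*)=d(\bH)$, $e(\bH^*)=e(\bH)$; the only difference is that you start from $T(\bH^*;y+1,x+1)$ and work back to $T(\bH;x+1,y+1)$, whereas the paper runs the same chain of equalities in the other direction. Your exponent bookkeeping checks out.
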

\begin{proof}
By using Proposition~\ref{prop:tuttedichrom}, Corollary~\ref{cor:dichromdual} and then Proposition~\ref{prop:tuttedichrom} again, we obtain
\begin{align*} T(\bH;x+1,y+1) &= 
{\sqrt x}^{\,d(\bH)-e(\bH)-f(\bH)} {\sqrt y}^{\,-v(\bH)} Z\left(\bH;\frac{\sqrt y}{\,\sqrt x},\sqrt x \sqrt y\right)\\
&=  {\sqrt x}^{\,-f(\bH)} {\sqrt y}^{\,d(\bH)-e(\bH)-v(\bH)}   \, Z\left(\bH^*;\frac{\sqrt x}{\sqrt y},\sqrt x \sqrt y\right) \\
&={\sqrt x}^{\,-v(\bH^*)} {\sqrt y}^{\,d(\bH^*)-e(\bH^*)-f(\bH^*)}   \, Z\left(\bH^*;\frac{\sqrt x}{\sqrt y},\sqrt x \sqrt y\right) \\
&= T(\bH^*;y+1,x+1).
\end{align*}
\end{proof}
It is worth emphasising that Proposition~\ref{prop:tdual} holds for all gehms, not just those of genus 0, in contrast with the more refined hypermap polynomial of~\cite[Theorem~2.16]{CHpreWhit} which only holds for genus 0 hypermaps.

As all the relevant parameters are additive over components, if $\bH_1$ and $\bH_2$ are disjoint gehms, we have $T(\bH_1 \sqcup \bH_2;x,y) = T(\bH_1;x,y)\,T(\bH_2;x,y)$. 

Now let $\bH_1$ and $\bH_2$ be disjoint gehms such that
for $i=1,2$, $\bH_i$ includes a $g$-edge $e_i=x_iy_i$ which is not an isolate. Then the join of $\bH_1$ and $\bH_2$ along $e_1$ and $e_2$, denoted by $\bH_1 \myjoin{e_1}{e_2} \bH_2$, is obtained by forming the disjoint union of $\bH_1$ and $\bH_2$, and then replacing the $g$-edges $e_1$ and $e_2$ by $g$-edges $x_1x_2$ and $y_1y_2$.  

\begin{proposition}
Let $\bH_1$ and $\bH_2$ be disjoint gehms such that
for $i=1,2$, $\bH_i$ includes a $g$-edge $e_i=x_iy_i$ which is not an isolate. Then we have
\[ T(\bH_1 \myjoin{e_1}{e_2} \bH_2;x,y) = T(\bH_1;x,y)\,T(\bH_2;x,y).\]
\end{proposition}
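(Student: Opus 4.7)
The plan is to prove the pointwise identity
\[\rho(A) = \rho(A_1) + \rho(A_2) \qquad\text{for every } A = A_1 \sqcup A_2 \subseteq E(\bH),\]
where $\bH := \bH_1 \myjoin{e_1}{e_2} \bH_2$. Because the two new $g$-edges $x_1x_2$ and $y_1y_2$ are purely $g$-coloured they enter no $b$-$r$-cycle, and hence $E(\bH) = E(\bH_1) \sqcup E(\bH_2)$, with every $A$ splitting uniquely as $A_1 \sqcup A_2$; moreover $d(A)$ and $|A|$ are additive across this split. Granted the above identity for $\rho$ (the case $A = E(\bH)$ recovers $\rho(\bH) = \rho(\bH_1)+\rho(\bH_2)$), substituting into Definition~\ref{def:hmtutte} lets the sum $\sum_A$ separate as $\bigl(\sum_{A_1}\bigr)\bigl(\sum_{A_2}\bigr)$, yielding $T(\bH) = T(\bH_1)\,T(\bH_2)$ at once.

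Using the Euler-formula expression $\rho(A) = \tfrac{1}{2}(v(A)+d(A)-|A|-f(A))$ together with the invariance $v(A) = v(\bH)$ recorded just after Lemma~\ref{lem:delconeasy}, the identity for $\rho$ reduces to the pair
\[v(\bH) = v(\bH_1) + v(\bH_2) - 1 \qquad\text{and}\qquad f(A) = f(A_1)+f(A_2)-1.\]
The first follows from a direct inspection of $g$-$r$-cycles in $\bH$: replacing the $g$-edges $e_1, e_2$ by $x_1x_2, y_1y_2$ merges the unique $g$-$r$-cycle of $\bH_1$ through $e_1$ with the unique $g$-$r$-cycle of $\bH_2$ through $e_2$ into a single $g$-$r$-cycle of $\bH$, while every other $g$-$r$-cycle of $\bH_i$ survives intact. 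The analogous argument applied to $b$-$g$-cycles gives $f(\bH) = f(\bH_1) + f(\bH_2) - 1$, which is the $A = E(\bH)$ case of the hyperface identity.

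For the hyperface identity at a general $A$, I would trace $b$-$g$-cycles of $\bH_{|A}$. All $b$-edges of $\bH_{|A}$ come from a single side, and the only $g$-edges of $\bH_{|A}$ that bridge the two sides are the residues of $x_1x_2$ and $y_1y_2$ (possibly extended through the degree-two suppressions caused by the deletions of the hyperedges in $E(\bH)-A$). Thus a $b$-$g$-cycle of $\bH_{|A}$ either lies entirely within one side, giving a $b$-$g$-cycle of $(\bH_i)_{|A_i}$ that avoids the residue of $e_i$, or traverses each of the two bridging $g$-edges exactly once, and exactly one such mixed cycle exists (the splice of the unique $b$-$g$-cycles through the two residues). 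Summing contributions yields $(f(A_1)-1) + (f(A_2)-1) + 1 = f(A_1)+f(A_2)-1$. The main obstacle is the degenerate case where a residue $e_i'$ of $e_i$ is an isolate of $(\bH_i)_{|A_i}$ (which happens, for example, when $A_i = \emptyset$ so that the $\bH_i$-side collapses entirely to $v(\bH_i)$ isolates), making $e_i'$ simultaneously a hypervertex and a hyperface and forcing the clean bijection above to be reinterpreted. I expect to sidestep this by inducting on $|E(\bH)|-|A|$, with base case $A = E(\bH)$ handled above, using the fact that deletion of any $\epsilon \in E(\bH_1) - A_1$ is local to the $\bH_1$-side and leaves $x_1x_2, y_1y_2$ untouched, so it alters $f(\bH_{|A})$ and $f((\bH_1)_{|A_1})$ by the same amount.
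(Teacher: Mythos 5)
Your overall strategy is the same as the paper's: reduce everything to the observation that, for every $A\subseteq E(\bH)$, passing from $(\bH_1\sqcup\bH_2)_{|A}$ to $\bH_{|A}$ decreases both $v$ and $f$ by exactly one, so that $\rho(\bH_{|A})=\rho((\bH_1\sqcup\bH_2)_{|A})$ and the sum in Definition~\ref{def:hmtutte} factorises. The paper records these two identities as a one-line observation; you are right that the face identity for general $A$ is the only point needing real care (the hypervertex identity is immediate since $v(A)=v(\bH)$), and your generic-case count $(f(A_1)-1)+(f(A_2)-1)+1$ is correct.

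The gap is in the inductive step you propose for the degenerate case. The claim that deleting $\epsilon$ ``alters $f(\bH_{|A})$ and $f((\bH_1)_{|A_1})$ by the same amount'' because the operation is ``local to the $\bH_1$-side'' is not justified: deletion reconnects $g$-edges around the $b$--$r$-cycle of $\epsilon$, and whether such a reconnection merges or splits $b$--$g$-cycles depends on the global connectivity of those cycles, not only on the local picture. In the joined gehm the $b$--$g$-cycles through the gehm-vertices of $\epsilon$ may run through the bridges into the $\bH_2$-side, so they are genuinely differently connected than in $(\bH_1)_{|A_1\cup\{\epsilon\}}$; the equality of the two increments is essentially equivalent to the identity you are trying to prove, so as written the induction is circular. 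A clean repair avoids the induction altogether: show that $\bH_{|A}$ is obtained from $(\bH_1)_{|A_1}\sqcup(\bH_2)_{|A_2}$ by cutting the residue of each $e_i$ (which is either a $g$-edge or an isolate) at one point and cross-reconnecting --- deletion only deletes $b$-edges, contracts $r$-edges and merges $g$-edges along suppressed paths, so it commutes with this reconnection. Since the two residues lie in different components, the reconnection always merges exactly one pair of $g$--$r$-cycles and one pair of $b$--$g$-cycles (an isolate, being simultaneously a hypervertex and a hyperface, is simply absorbed), so $v$ and $f$ each drop by one uniformly and no case split is needed.
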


\begin{proof}
    Let $\bH=\bH_1 \myjoin{e_1}{e_2} \bH_2$. Observe that 
   for any subset $A$ of the hyperedges of $\bH$, we have
    $v(\bH_{|A}) = v((\bH_1 \sqcup \bH_2)_{|A})-1$ and 
$f(\bH_{|A}) = f((\bH_1 \sqcup \bH_2)_{|A})-1$. Hence 
$\rho(\bH_{|A}) = \rho((\bH_1 \sqcup \bH_2)_{|A})$. Thus
\[ T(\bH;x,y) = T(\bH_1\sqcup \bH_2;x,y)  = T(\bH_1;x,y)\,T(\bH_2;x,y). \]
\end{proof}

In order to state some evaluations of $T$ we make the following definitions.
Notice that for a gehm $\bH$ we have $v(\bH)\leq d(\bH)-e(\bH)+k(\bH)$. (For any graph $G$ we have $e(G)\geq v(G)-k(G)$. The gehm inequality follows by applying this to the underlying bipartite graph of $\bH$  as a hypermap.) When equality holds we say that $\bH$ is a \emph{hyperforest}. Then a \emph{hypertree} is a connected hyperforest.
For example, deleting the degree four hyperedge of Figure~\ref{f.ex1b} gives a hypertree.

For a hypertree $\bH$, we have
\[ \gamma(\bH) = 2-f(\bH)+d(\bH)-e(\bH)-v(\bH) = 1-f(\bH).\]
As $\gamma(\bH)\geq 0$ and $f(\bH)\geq 1$, we deduce that $f(\bH)=1$ and $\gamma(\bH)=0$.

For a gehm $\bH$, we define $t(\bH)$ to be its number of \emph{spanning hypertrees}, that is, the number of subsets $A$ of $E(\bH)$ so that $\bH_{|A}$ is a hypertree.

\begin{proposition}\label{prop:evaluations}
Let $\bH$ be a gehm. Then 
\begin{enumerate} 
\item $T(\bH;2,2)=2^{e(\bH)}$.
\item $T$ does not detect orientability.
\item If $\bH$ is connected, then 
\[ T(\bH;1,1)= \begin{cases} 
t(\bH) & \text{if $\bH$ has Euler genus $0$,} \\
0 & \text{otherwise.}   
\end{cases} \]
\end{enumerate}
\end{proposition}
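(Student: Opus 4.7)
The plan is to work from the subset-expansion formula of Definition~\ref{def:hmtutte}, that is,
\[ T(\bH;x,y) = \sum_{A\subseteq E(\bH)} (x-1)^{\rho(\bH)-\rho(A)} (y-1)^{d(A)-|A|-\rho(A)}, \]
exploiting the non-negativity of both exponents established in the preceding proposition, together with the identity $d(A)-|A|-\rho(A) = f(A)-k(A)+\tfrac12\gamma(A)$, which follows from the displayed formula $\rho(A)=\tfrac12(v(A)+d(A)-|A|-f(A))$ combined with Euler's formula.

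Part~(1) is immediate: substituting $x=y=2$ makes every factor $(x-1)^{\bullet}$ and $(y-1)^{\bullet}$ equal to $1$, so each of the $2^{e(\bH)}$ subsets of $E(\bH)$ contributes $1$ and the sum is $2^{e(\bH)}$. Part~(2) is witnessed by Figure~\ref{f.ex4}: a direct computation (either from the subset sum or from Theorem~\ref{thm:delcon}) shows that both gehms depicted there have Tutte polynomial $x+y-2$, yet one is orientable while the other is not.

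Part~(3) is the substantive case. At $x=y=1$ every factor becomes $0^{\text{non-negative}}$, which is $0$ unless the exponent is $0$ (taking $0^0=1$). Hence $T(\bH;1,1)$ equals the number of subsets $A\subseteq E(\bH)$ satisfying both $\rho(\bH)-\rho(A)=0$ and $d(A)-|A|-\rho(A)=0$. Writing $\rho(\bH)-\rho(A)=(k(A)-k(\bH))+\tfrac12(\gamma(\bH)-\gamma(A))$, and using the facts already invoked in the paper that deleting hyperedges cannot decrease the number of components nor increase the Euler genus, the first equation forces $k(A)=k(\bH)$ and $\gamma(A)=\gamma(\bH)$. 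The second equation, rewritten as $f(A)-k(A)+\tfrac12\gamma(A)=0$, together with $\gamma(A)\geq 0$ and $f(A)\geq k(A)$, forces $\gamma(A)=0$ and $f(A)=k(A)$. Combining these conclusions gives $\gamma(\bH)=0$ and $f(A)=k(\bH)$. Therefore, if $\bH$ is connected with $\gamma(\bH)>0$ no subset can contribute and $T(\bH;1,1)=0$; if $\bH$ is connected and planar, then any contributing $A$ satisfies $k(A)=f(A)=1$ and $v(A)=v(\bH)$, so Euler's formula applied to $\bH_{|A}$ yields $v(\bH)=d(A)-|A|+1$, which is precisely the hypertree condition of the definition preceding the proposition. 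Conversely, every spanning hypertree of $\bH$ contributes exactly $1$, so $T(\bH;1,1)=t(\bH)$.

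The main technical point is translating the second exponent into the ``dual'' geometric quantity $f(A)-k(A)+\tfrac12\gamma(A)$; once that identification is made, the vanishing of both exponents at $(x,y)=(1,1)$ automatically isolates precisely the spanning hypertrees, and the genus-$0$ requirement falls out as a consistency condition rather than being imposed by hand.
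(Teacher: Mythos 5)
Your proof is correct and follows essentially the same route as the paper: all three parts work from the subset expansion, and part~(3) counts the subsets $A$ for which both exponents vanish, using the monotonicity of genus and component count under deletion, the non-negativity of $\gamma(A)$, and Euler's formula to force $\gamma(\bH)=0$, $k(A)=f(A)=1$, and the hypertree condition. Your repackaging of the two exponents as sums of manifestly non-negative geometric quantities ($(k(A)-k(\bH))+\tfrac12(\gamma(\bH)-\gamma(A))$ and $f(A)-k(A)+\tfrac12\gamma(A)$) is a slightly cleaner organization of the same inequalities the paper combines, not a different argument.
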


\begin{proof}
\begin{enumerate}
\item When $x=y=2$, every term in the sum in the right-side of Equation~\eqref{eq:Tuttedef} equals $1$, and the result follows easily.
\item For example, the gehms in Figure~\ref{f.ex4}  share the same Tutte polynomial, namely $x+y-2$, but only the gehm in Figure~\ref{f.ex4b} is orientable. 
\item Clearly $T(\bH;1,1)$ is equal to the number of subsets $A$ of $E(\bH)$ with $\rho(A)=\rho(\bH)=d(A)-|A|$. 
As $\bH$ is connected, we have $\rho(\bH)=v(\bH)-1+\gamma(\bH)/2$.

Next suppose that $\gamma(\bH)=0$ and that $\bH_{|A}$ is a hypertree. Then $\rho(A)=v(A)-1$ and, from the definition of a hypertree, $d(A)-|A|=v(A)-1$.
Hence $\rho(A)=\rho(\bH)=d(A)-|A|$ and $A$ contributes one to $T(\bH;1,1)$. 

Now let $A$ be a subset of $E(\bH)$ with $\rho(A)=\rho(\bH)=d(A)-|A|$.
The condition $\rho(A)=d(A)-|A|$ is equivalent to 
\begin{equation} \label{eq:r(A)=A} 
d(A)-|A| = v(A) - f(A).
\end{equation}
The non-negativity of $\gamma(A)$ gives
\begin{equation} \label{eq:genusstuff} d(A)-|A|\geq f(A)+v(A)-2k(A).\end{equation} 
By combining these two equations we get $f(A)\leq k(A)$ which gives $f(A)=k(A)$. Then Equation~\eqref{eq:r(A)=A} yields $v(A)=d(A)-|A|+k(A)$ which implies that $\bH_{|A}$ is a hyperforest. Moreover, we also get
\[ \rho(A)= d(A)-|A| = v(\bH)-k(A).\]
Thus the condition $\rho(A)=\rho(\bH)$ is equivalent to $v(\bH)-k(A) = v(\bH)-1+\gamma(\bH)/2$ which is only satisfied when $\gamma(\bH)=0$ and $k(A)=1$, that is, when $\gamma(\bH)=0$ and $\bH_{|A}$ is a hypertree.
\end{enumerate}
\end{proof}

\section{Connections with other polynomials}

\subsection{Classical and topological Tutte polynomials}\label{ss:classical}
We begin by describing the coincidence of $T(\bH;x,y)$  with the Tutte polynomials of graphs and maps.
If $\bH$ is a gem and therefore represents a graph embedded in surface then $T(\bH;x,y)$ coincides with the \emph{ribbon graph polynomial}, also known as the \emph{2-variable Bollob\'as--Riordan polynomial} or the \emph{Tutte polynomial of cellularly embedded graphs}. The ribbon graph polynomial is an important and well-studied map analogue of the Tutte polynomial~\cite{zbMATH07635160,zbMATH07395405,zbMATH06832600,iainmatrixannals}. When $\bH$ is a gem, $T(\bH;x,y)$ is also a specialisation of the  \emph{Bollob\'as--Riordan polynomial} of~\cite{zbMATH01801590}, with $ T(\bH;x,y)= (x-1)^{-\gamma(\bH)/2} R(\bH; x,y-1,\sqrt{(x-1)(y-1)},1) $.     
If the gem $\bH$ represents a graph $G$ embedded in the plane, then by Euler's Formula $\rho(A)$ equals the rank of the graph $G\ba (E-A)$. It follows that in this case $ T(\bH;x,y)=T(G;x,y)$ where $T(G;x,y)$ is the classical Tutte polynomial of the graph $G$.

\subsection{Transition polynomials}

Here we use the term \emph{Eulerian graph} to mean a graph in which each vertex is of even degree (and in particular it need not be connected). In order to accommodate isolates, we also allow Eulerian graphs to have  edges that are incident with no vertices. We call these \emph{free loops}.

The multivariable dichromatic polynomial $Z(\bH;\boldsymbol{u},v)$ can  be seen to be an evaluation of the transition polynomial of an Eulerian graph.  To make this connection, we must first define the medial map of a gehm.

Let $\bH$ be a gehm. Its \emph{medial map} $\bH_m$ is an  Eulerian map defined as follows.
Consider the natural embedding of the gehm in a surface $\Sigma$ as given in Construction~\ref{makehypermap}. Recall that each face in the embedding corresponds to a hypervertex, hyperedge or hyperface. 
Then $\bH_m$ is the graph embedded in $\Sigma$ constructed as follows. 
Retain all isolates, so each  isolate becomes a circle embedded in a sphere. 
For all non-isolate components proceed as follows.   
For vertices of  $\bH_m$, place one vertex in the interior of each of the faces  of the embedded gehm that corresponds to a hyperedge. 
Form the edges of $\bH_m$ by embedding non-intersecting arcs as follows. For each vertex $w_e$ of $\bH_m$ corresponding to a hyperedge $e$, embed non-intersecting arcs from each $w_e$ to each gehm-vertex of the hyperedge $e$. Now for each $g$-edge in the gehm, join up the two arcs meeting its end-vertices to form an edge of $\bH_m$, as in Figure~\ref{fig:med1}.

Note that each face of the map $\bH_m$, other than those in components arising from isolates, corresponds to either a hypervertex or hyperface of $\bH$. Colour the faces corresponding to hypervertices grey and those corresponding to hyperfaces white. For the components arising from isolates, assign one colour to each face. This results in a proper face 2-colouring. We call this a \emph{natural checkerboard colouring} of $\bH_m$.

\begin{figure}[ht!]
     \centering
\hfill
   \begin{subfigure}[c]{0.45\textwidth}
        \centering
  \labellist
        \small\hair 2pt
        \pinlabel $S^2$ at 165 130 
      \endlabellist
      \includegraphics[scale=0.8]{f2natembed}
        \caption{Natural embedding of the gehm $\bH $ in the sphere.}
        \label{f.ex3new}
     \end{subfigure}
        \hfill
           \begin{subfigure}[c]{0.45\textwidth}
                \centering
  \labellist
        \small\hair 2pt
        \pinlabel $S^2$ at 165 130 
      \endlabellist
      \includegraphics[scale=0.8]{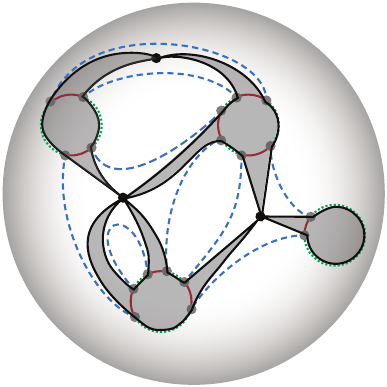}
        
        \caption{The face 2-coloured  medial map $\bH_m$ in the sphere, superimposed on the natural embedding of the gehm.}
        \label{f.HMedial}
     \end{subfigure}
\caption{Creating and face 2-colouring the medial map $\bH_m$ of a gehm $\bH$.}
\label{fig:med1}
\end{figure}

We now recall the generalized transition polynomial and define a specialisation of it that, when applied to the medial map, agrees with the dichromatic polynomial of the gehm.  The generalised transition polynomial, $q(G; W,t)$,  of \cite{E-MS02} is a multivariate graph polynomial that generalises Jaeger's transition polynomial of~\cite{Ja90}.   In \cite{EMMa}, the authors specialised the generalised transition polynomial to maps, calling this specialisation the \emph{topological transition polynomial}.  Our hypermap (or gehm) transition polynomial uses analogous ideas.

A \emph{vertex state} at a vertex $w$ of an Eulerian graph is a partition of the half-edges incident with $w$ into pairs.
The corresponding \emph{smoothing} at a vertex $w$ 
is the result of the following process for all half-edges that are paired.
If $(u,w)$ and $(v,w)$ are two non-loop edges whose half-edges are paired at the vertex $w$, then we replace these two edges with a single edge $(u,v)$.  In the case of a loop, we temporarily insert an extra vertex of degree two on the loop, carry out the operation, and then suppress the temporary vertex.

A {\em graph state} of an Eulerian graph $G$ is a choice of vertex state at each of its vertices.
A  set of free loops is obtained from a graph state $s$ by smoothing each vertex state in it. We let $k(s)$ denote the number of free loops arising from $s$ in this way.
If $G$ has no vertices, and so is either empty or a collection of free loops then its unique graph state is just itself.
We let $\mathcal{S}(G)$ denote the set of all graph states of $G$.

We can now assign weights to these vertex and graph states.  
\begin{definition}
Let $\mathcal{R}$ be a commutative ring with unity.  
\begin{itemize}

 \item A \emph{pair weight} is an association of a value $p(e_w, f_w) \in \mathcal{R}$ to a pair
of half-edges incident with a vertex $w$.  

\item A \emph{weight system} of an Eulerian graph $G$, denoted $\Omega(G)$, or simply $\Omega$ when $G$ is clear from context,  is an assignment of a pair weight to every possible pair of adjacent half-edges of $G$.

 \item
The \emph{vertex state weight} of a vertex state of a vertex $w$ is  $\prod p(e_w, f_w)$ over all pairs $(e_w, f_w)$ forming the vertex state.

\item  The \emph{state weight} of a graph state $s$ of a graph $G$ with weight
system $\Omega$ is $\omega(s) = \prod \omega(w,s)$ where $\omega(w,s)$ is the vertex state weight of the
vertex state at $w$ in the graph state $s$, and where the product is over all vertices of
$G$.
\end{itemize}
\end{definition}

Note that in many specialisations of the generalised transition polynomial, it is common to give just vertex state weights for particular vertex states.  Thus, if a vertex has degree $2n$, then implicit in giving just a vertex state weight of say $\alpha$ for some vertex state is that all the pair weights for the edge pairs comprising that state are $ \alpha^{1/n}$. This assignment of pair weights of course has to be consistent across all the vertex states. 
It is also common to use additional information, such as the cyclic order of the edges about a vertex in an embedding or a face colouring to determine vertex state weights.  

Figure~\ref{VertWeights} shows a vertex $v$ of degree $4$ within a graph $G$. So a weight system for $G$ would include $6$ pair weights corresponding to the $\binom 42$
pairs of half-edges at $v$. There are three vertex states at $v$, corresponding to the three ways of partitioning the four half-edges into sets of size two. We are usually interested in the product of the pair weights corresponding to the pairing of half-edges in a vertex state, so as noted above it is common to specify the vertex state weights, providing these are consistent. Issues of consistency never arise for a vertex of degree four, and a potential set of the three vertex state weights at $v$ is also shown in~Figure~\ref{VertWeights}. In this case $G$ is a two face-coloured   map and the weights are determined from the colouring.

\begin{figure}[ht!]
\centering   
\labellist
        \small\hair 2pt
        \pinlabel $a$ at 211 15 
        \pinlabel $b$ at   330 15 
        \pinlabel $c$ at  457 15
      \endlabellist
\includegraphics[scale=0.6]{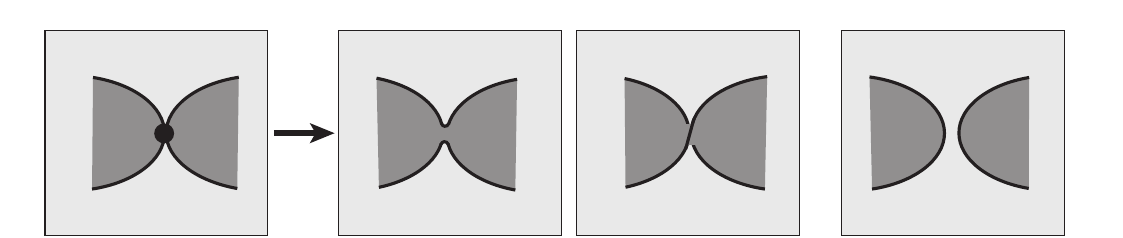}
\caption{A set of vertex state weights for all vertex states at a vertex of degree 4 in a face 2-coloured map (showing part of the surface), where $a,b,c\in\mathcal{R}$.}
\label{VertWeights}
\end{figure}

Figure~\ref{TransPolyTermA} shows a face $2$-coloured map $\bG$ with two vertices, both of degree four. (The map is in the sphere with the drawing on the page indicating the embedding.) So $\bG$ has nine graph states. One of these graph states is shown in  Figure~\ref{TransPolyTermB} with the labels on the vertices indicating the vertex state weights following the scheme given in Figure~\ref{VertWeights}. Thus if $s$ is this graph state, $\omega(s)=ab$ and $k(s)=1$.

\begin{figure}[ht!]
 \centering
\hfill
   \begin{subfigure}[t]{0.45\textwidth}
        \centering
        \labellist
        \small\hair 2pt
        \pinlabel $\subset S^2$ at 275 15 
      \endlabellist
\includegraphics[scale=0.5]{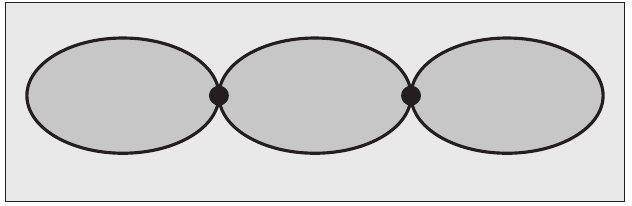}        \caption{A plane, face 2-coloured map $\bG$.}
        \label{TransPolyTermA}
     \end{subfigure}
     \hfill
        \begin{subfigure}[t]{0.45\textwidth}
        \centering
\labellist
        \small\hair 2pt
        \pinlabel $a$ at 100 15 
        \pinlabel $b$ at   192 15 
                \pinlabel $\subset S^2$ at 275 15 
      \endlabellist
\includegraphics[scale=0.5]{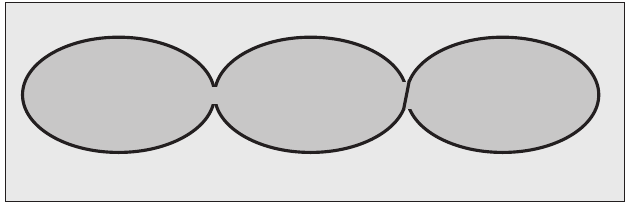}        \caption{One of the graph states of $\bG$ showing its vertex state weights.}
        \label{TransPolyTermB}
     \end{subfigure}
\caption{An example of a map and a graph state.}
\label{TransPolyTerm}
\end{figure}

The generalised transition polynomial is then defined as follows.
\begin{definition}
    Let $G$ be an Eulerian graph, with weight system $\Omega$.  Then the \emph{generalised transition polynomial} is
    \begin{equation*}
        q(G,\Omega;x) = \sum_{s\in\mathcal{S}(G)} \omega(s)\, x^{k(s)}.
    \end{equation*} 
\end{definition}

We now apply the generalized transition polynomial to obtain a hypermap transition polynomial via the medial map.
 We will see that with appropriately chosen weight systems, both the coarse  Tutte polynomial for hypermaps defined here and the topological transition polynomial of~\cite{EMMa} are specialisations of the hypermap transition polynomial. 

 \begin{definition}[The hypermap transition polynomial] \label{HMq}
   The  \emph{hypermap transition polynomial} $\Phi(\bH,  \Omega , t)$
   is 
   the specialization of the generalised transition polynomial  to medial maps of gehms (i.e., hypermaps)  given by 
\begin{equation*}
\Phi(\bH,  \Omega , t) =\sum_{s \in \mathcal{S}( \bH_m)} \omega( s )\, t^{k(s)},
\end{equation*}
where $\Omega$ is a weight system for $\bH_m$.
\end{definition}

For our purposes, we need a particular specialisation of the hypermap transition polynomial, using only two types of vertex smoothings.
If $\bH$ is a gehm and  $\bH_m$ is its naturally checkerboard coloured medial map then we may distinguish two special vertex states at $w$.
Travelling round $w$ we see half-edges and faces in the cyclic order $h_0 f_{0,g} h_1 f_{1,w} \cdots f_{2d-1,w} h_{0}$ where  $h_i$ are the half-edges, $f_{i,g}$ grey faces and $f_{i,w}$ white faces. 
The \emph{$c$-state} pairs $\{h_1,h_2\},\{h_3,h_4\}, \ldots \{h_{2d-1},h_{0}\}$;  
the \emph{$d$-state} pairs $\{h_0,h_1\},\{h_2,h_3\}, \ldots \{h_{2d-2},h_{2d-1}\}$.
Note that for  vertices of degree 2 the $c$-state and $d$-state are identical.
See  Figure~\ref{fig:vstates}.

\begin{figure}[ht!]
     \centering
   \begin{subfigure}[c]{0.45\textwidth}
        \centering
\includegraphics[scale=0.83]{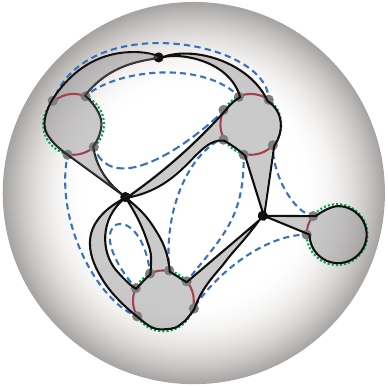}
        \caption{The medial map $\bH_m $ superimposed on the natural embedding of $\bH$ in the sphere.}
        \label{MedSmootha}
     \end{subfigure}

\vspace{5mm}
        \begin{subfigure}[c]{0.45\textwidth}
        \centering
\centering
   \includegraphics[scale=0.83]{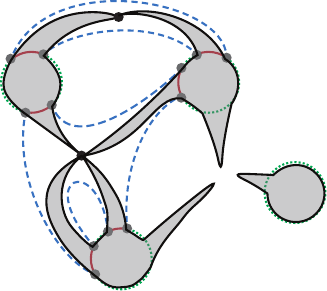}
        \caption{A $d$-smoothing in $\bH_m$ corresponding to $\bH\ba e$ where $e$ is the degree three hyperedge in $\bH$.}
        \label{MedSmoothb}
     \end{subfigure}
  \hfill
        \begin{subfigure}[c]{0.45\textwidth}
        \centering
\centering
\includegraphics[scale=0.83]{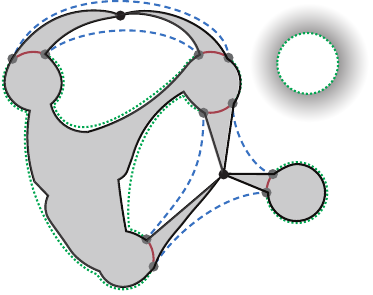}
        \caption{A $c$-smoothing in $\bH_m$ corresponding to $\bH\con f$ where $f$  is the degree four hyperedge in $\bH$. }
        \label{MedSmoothc}
     \end{subfigure}
\caption{$d$- and $c$-smoothings in a medial map corresponding to deleting and contracting a hyperedge in a gehm. In the bottom two figures, the isolate is in one sphere, while the rest of the diagram lies in a separate sphere.}
\label{fig:vstates}
\end{figure}

Using the notation in the previous paragraph, we define pair weights by setting, for vertices of degree $2d\geq 4$,  
$p(h_1,h_2)=p(h_3,h_4)= \cdots = p(h_{2d-1},h_{0})=u^{1-1/d}$, and 
$p(h_0,h_1)=p(h_2,h_3)= \cdots = p(h_{2d-2},h_{2d-1})=1$, and all other $p(h_i,h_j)=0$.
For vertices of degree two, $p(h_1,h_2)=2$. 
Let $\Omega_m(\bH)$ denote the resulting weight system.

\begin{theorem}\label{thm:tz}
Let $\bH$ be a gehm, $\bH_m$ be its medial map, and $\Omega_m(\bH)$ its medial weight system. Then.
\[  \Phi(\bH, \Omega_m(\bH), v) =   Z(\bH; u,v ) . \]
\end{theorem}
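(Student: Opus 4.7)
The plan is to prove the identity by induction on the number of hyperedges of $\bH$. For the base case, suppose $E(\bH) = \emptyset$. Then $\bH$ consists of $v(\bH) = f(\bH)$ isolates, so by Construction~\ref{makehypermap} and the definition of $\bH_m$, the medial map is a disjoint union of $v(\bH)$ free loops with no vertices. Its unique graph state $s$ therefore satisfies $\omega(s) = 1$ and $k(s) = v(\bH)$, so $\Phi(\bH, \Omega_m(\bH), v) = v^{v(\bH)} = v^{f(\bH)} = Z(\bH; u, v)$.

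For the inductive step, fix a hyperedge $e \in E(\bH)$, which gives a vertex $w_e$ of $\bH_m$ of degree $2d(e)$. By the construction of $\Omega_m(\bH)$, every pair weight at $w_e$ outside the $c$-state and $d$-state pairings vanishes, so only graph states whose vertex state at $w_e$ is the $c$-state or the $d$-state contribute. Provided $d(e) \geq 2$ these two states are distinct, and we split the sum defining $\Phi$ accordingly: the $d$-state at $w_e$ contributes the vertex-weight factor $1$ and the $c$-state contributes the factor $u^{d(e)-1}$. When $d(e) = 1$ the two states coincide, and the single vertex state has weight $2 = 1 + u^{d(e)-1}$, so that both would-be contributions are combined correctly.

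The geometric heart of the argument, illustrated in Figure~\ref{fig:vstates}, is the claim that smoothing $w_e$ produces the medial map of the appropriate minor:
\begin{itemize}
\item After a $d$-smoothing at $w_e$, what remains (with the pair weights at the other medial vertices inherited from $\Omega_m(\bH)$) is exactly $(\bH \ba e)_m$ equipped with $\Omega_m(\bH \ba e)$ and its natural checkerboard colouring.
\item After a $c$-smoothing at $w_e$, what remains is $(\bH \con e)_m$ equipped with $\Omega_m(\bH \con e)$ and its natural checkerboard colouring.
\end{itemize}
The first claim holds because the $d$-state pairs half-edges across the grey (hypervertex) faces incident to $w_e$, which is precisely the local picture of weak hyperedge deletion in the natural embedding of $\bH$; the second because the $c$-state pairs across the white (hyperface) faces, mirroring hyperedge contraction. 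In each case, verifying that the residual pair-weights agree with those specified by $\Omega_m$ for the smaller gehm is a routine comparison of face colours and degrees.

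Combining these observations with the induction hypothesis applied to $\bH \ba e$ and $\bH \con e$, and then invoking Corollary~\ref{cor:dichromdelcon}, we obtain
\begin{align*}
\Phi(\bH, \Omega_m(\bH), v) &= \Phi(\bH \ba e, \Omega_m(\bH \ba e), v) + u^{d(e)-1}\, \Phi(\bH \con e, \Omega_m(\bH \con e), v) \\
&= Z(\bH \ba e; u, v) + u^{d(e)-1}\, Z(\bH \con e; u, v) \\
&= Z(\bH; u, v),
\end{align*}
as required. The main obstacle is the geometric step above: one must carefully verify that smoothing at $w_e$ interacts correctly with the surface in which $\bH$ is naturally embedded (including creation of new spherical components when deletion or contraction creates isolates) and with the natural face $2$-colouring inherited by the residual medial map.
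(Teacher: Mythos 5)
Your argument is correct, but it takes a genuinely different route from the paper's. The paper proves the identity directly, with no induction: it observes that the only graph states of nonzero weight are those in which every medial vertex carries a $c$- or $d$-state, puts these states in correspondence with the subsets $A\subseteq E(\bH)$ via the set $C(s)$ of vertices carrying a $c$-state, notes that the state weight is then $\prod_{e\in C(s)}u^{d(e)-1}=u^{d(A)-|A|}$, and invokes a single global geometric fact --- that the free loops obtained by smoothing all of $s$ at once are exactly the faces of $\bH_{|A_s}$, so $k(s)=f(A_s)$ --- to match the subset expansion of $Z$ term by term. You instead localize the geometry to one vertex at a time: a $d$-smoothing at $w_e$ yields $(\bH\ba e)_m$ with its medial weight system and a $c$-smoothing yields $(\bH\con e)_m$, which gives a deletion--contraction recursion for $\Phi$ mirroring Corollary~\ref{cor:dichromdelcon}, and you close by induction on $e(\bH)$. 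Both arguments rest on the same correspondence between smoothings and minors; yours needs it only for a single vertex (which is easier to verify in isolation, and which neatly explains why the degree-two pair weight must be $2=1+u^{d(e)-1}$, a point the paper's bijection quietly absorbs into the weight), at the cost of the extra bookkeeping of checking that the checkerboard colouring, the weight system, and any free loops created along the way are inherited correctly by the partially smoothed map --- you rightly flag this as the step needing care, and it is no harder than the global claim the paper asserts in one line.
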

\begin{proof}
First, notice that $\omega(s) = 0$ unless all vertex states are $c$-states or $d$-states. Write $\mathcal{S}'( \bH_m)$ for the set of states consisting only of $c$-states and $d$-states.
Using $w_e$ to denote the vertex of $\bH_m$ corresponding to the hyperedge $e$ of $\bH$,
let $C(s)=\{e\in E(\bH): w_e \in V(\bH_m) \text{ has a } c\text{-state in } s\}$.  Thus, the states of $\mathcal{S}'( \bH_m)$  are in one-to-one correspondence with subsets $A \subseteq E(\bH)$ by $A_s=C(s)$. 
Second, notice that the free loops arising from the smoothing of $s$ correspond to the hyperfaces of $\bH\ba(E(\bH)-A_s)\con A_s$ which in turn correspond to the faces of $\bH_{|A_s}$.  This gives:
\begin{align*}
   \Phi(\bH, \Omega_m(\bH), v) 
   &=\sum_{s \in \mathcal{S}( \bH_m)} \omega( s )v^{k(s)}
   \\
   &=\sum_{s \in \mathcal{S'}( \bH_m)}
   \Big(\prod_{e\in C(s)} {u^{d(e)-1}}\Big)  v^{k(s)}
      \\&=\sum_{A \in E( \bH)} u^{d(A)-|A|}
      v^{f(A)}
   \\&=Z(\bH;\boldsymbol{u}, v). 
\end{align*}
\end{proof}
We note that it is straightforward to extend Theorem~\ref{thm:tz} to recover the multivariate dichromatic polynomial from the transition polynomial.

To recover the topological transition polynomial, if $\bH$ is a gem then every vertex in $\bH_m$ has degree 4. Following our previous notation we set
$p(h_1,h_2)=p(h_3,h_0)=\alpha$,
$p(h_0,h_1)=p(h_2,h_3)= \beta$, 
$p(h_0,h_2)=p(h_1,h_3)= \gamma$.
Let $\Omega_t(\bH)$ denote the resulting weight system. It is then immediate that  
  $ \Phi(\bH, \Omega_t(\bH), t) $ is the topological transition polynomial of~\cite{EMMa}.

\subsection  {Cori and Hetyei's Whitney polynomial for hypermaps}\label{CHpoly}

In~\cite{CHpreWhit}, Cori and Hetyei define a Whitney polynomial for hypermaps, $R(\bH)$, using a more refined definition of edge deletion than we use here for $Z(\bH)$. The two polynomials appear to be distinct in that it does not seem possible to recover one from the other.

We first give the edge deletion of~\cite{CHpreHyperDelCon, CHpreWhit}, which is based on hyperedge refinements, in terms of our gehm constructions, and then compare deletions and the resulting polynomials.
A refinement of a hyperedge, in terms of the gehm representation, is the result of viewing the $b$--$r$-cycle of the hyperedge as lying on a circle and replacing a subset of its $b$-edges  by the same number of non-crossing chords. 
 These new edges  are also $b$-edges and form smaller alternating $b$--$r$-cycles with the remaining $b$-edges and $r$-edges of the original hyperedge.  See Figure~\ref{Blisters}.  Any such refinement is a form of hyperedge deletion.  A \emph{total} refinement removes all the original $b$-edges from the $b$--$r$-cycle, and replaces them with chords parallel to each of the $r$-edges. These total refinements give the closest correspondence with our definition of edge deletion.  Again, see Figure~\ref{Blisters}.

 \begin{figure}[ht!]
     \centering
        \begin{subfigure}[c]{0.4\textwidth}
        \centering
  \includegraphics[scale=0.9]{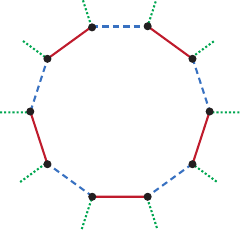}
        \caption{A  hyperedge in a gehm.}
        \label{fig:Ref1}
     \end{subfigure}
        \hfill
        \begin{subfigure}[c]{0.4\textwidth}
        \centering
  \includegraphics[scale=0.9]{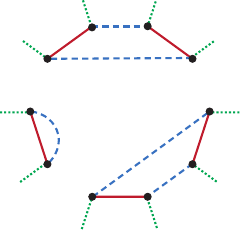}      \caption{A refinement of the hyperedge.}
        \label{fig:Ref2}
     \end{subfigure}

\vspace{10mm}

    \begin{subfigure}[c]{0.4\textwidth}
        \centering
  \includegraphics[scale=0.9]{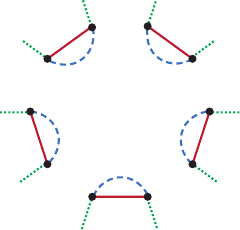}      \caption{A total refinement of the hyperedge.}
        \label{fig:Ref3}
     \end{subfigure}
        \hfill
        \begin{subfigure}[c]{0.4\textwidth}
        \centering
  \includegraphics[scale=0.9]{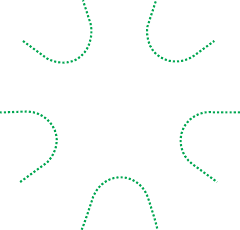} 
  \caption{Deleting the hyperedge.}
        \label{fig:Ref4}
     \end{subfigure}
\caption{Comparing edge refinements and deletion.}
\label{Blisters}
\end{figure} 

 The Whitney polynomial for hypergraphs given by Cori and Hetyei, rewritten in our framework, is 
  \[   R(\bH; u, v) = u^{-k(\bH)} v^{d(\bH)-v(\bH)} \sum_{\beta} (uv)^{k(\bH_{\beta})} v^{- e(\bH_{\beta})}.
  \]
By rephrasing the definition in this way $R(\bH; u, v)$ extends immediately to nonorientable hypermaps.
Here, the sum is over all possible hypermap refinements $\beta$, where $\beta$ gives a choice of refinement for each of the hyperedges, and $\bH_{\beta}$ is the resulting gehm. 

Due to the edges of degree one retained in the total refinements, it also does not seem possible to recover the polynomial $Z$  from the  polynomial $R$  by restricting the sum in $R$  to the total refinements. 

The difference between $R$ and $Z$  is also apparent even in the case of gems where there is a one-to-one correspondence between the refinements defining $R$ and the deletions defining $Z$.    For gems, $R$ coincides with the classical Whitney polynomial of the underlying graph, as noted in \cite{CHpreWhit}, and thus it does not retain topological information for maps.  However, $Z$ coincides with the Tutte  polynomial of maps, and thus does encode the topological information. For example, $R$ does not distinguish between two loops on a sphere and on a torus, while $Z$ does. On the other hand, the two gehms shown in Figure~\ref{fig:ZsameRnot} satisfy $Z(\bH_1;x,y)=Z(\bH_2;x,y)=v^2+u^3v^3$, but
\begin{align*}
    R(\bH_1;u,v) &= u(1+2v+v^2) +(4+5v+v^2)\\
    \intertext{and}
    R(\bH_2;u,v) &= u(1+3v+v^2) + (3+5v+v^2).    
\end{align*}

\begin{figure}[ht!]
     \centering
        \hfill
        \begin{subfigure}[c]{0.4\textwidth}
        \centering
\centering
    \includegraphics[scale=1.2]{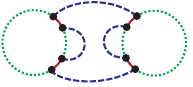}
        \caption{$\bH_1$.}
        \label{fig:ZsameRnota}
     \end{subfigure}
        \hfill
        \begin{subfigure}[c]{0.4\textwidth}
        \centering
  \includegraphics[scale=1.2]{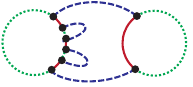}       
  \caption{$\bH_2$.}
        \label{fig:ZsameRnotb}
     \end{subfigure}
\caption{Two gehms with the same $Z$ but different $R$.}
\label{fig:ZsameRnot}
\end{figure}

\section{Concluding remarks}
There are now of course many possible directions for exploring and applying these analogues of the dichromatic, Tutte, and Whitney polynomials for hypermaps.  Among them is the question of computational complexity, and we close with a brief discussion of some complexity issues. 

In Section~\ref{ss:classical} we observed that if $\bH$ represents a graph $G$ embedded in the plane, then $T(\bH;x,y)=T(G;x,y)$. 
Vertigan proved in~\cite{zbMATH05029545} that for a fixed rational point $(x,y)$ it is $\#$P-hard to evaluate the Tutte polynomial $T(G;x,y)$ of a planar graph $G$ except when $(x-1)(y-1)\in \{1,2\}$ or $(x,y) \in \{(-1,-1),(1,1)\}$. It follows immediately that for a fixed rational point $(x,y)$ it is \mbox{$\#$P-hard} to evaluate the Tutte polynomial $T(\bH;x,y)$ of a gehm $\bH$ except possibly when $(x-1)(y-1)\in \{1,2\}$ or $(x,y) \in \{(-1,-1),(1,1)\}$. The complexity of evaluating $T(\bH;x,y)$ at the exceptional points is unclear. For example, in Proposition~\ref{prop:evaluations}, we observed that if $\bH$ is connected and has genus $0$, then $T(\bH;1,1)=t(\bH)$. Using a straightforward reduction from \textsc{Exact Cover by 3-Sets}~\cite{zbMATH03639144}, it is not difficult to show that deciding whether $t(\bH)>0$ for arbitrary hypermaps is NP-complete, but without specializing this result to hypermaps with genus zero, which seems far from straightforward, it is not possible to derive any hardness result concerning $T(\bH;1,1)$, even for arbitrary hypermaps.
Therefore we pose the following question.

\begin{question}
    Determine the complexity of computing $T(\bH;1,1)$.
\end{question}

Given that most evaluations of $T(\bH;x,y)$ are $\#$P-hard it is natural to look for appropriate parameters around which one may construct a fixed parameter tractable algorithm. Ultimately one might hope to extend Makowsky's very general approach~\cite{zbMATH02136947} for graph polynomials expressible in Monadic Second Order Logic to the hypermap setting.

\begin{acknowledgement}
Part of this work was undertaken while the authors were at the \emph{MATRIX Workshop on Uniqueness and Discernment in Graph Polynomials}.
We would like to thank MATRIX for providing a productive and inspiring environment. 
\end{acknowledgement}

\ethics{Open Access:}{For the purpose of open access, the authors have applied a Creative Commons
Attribution (CC BY) licence to any Author Accepted Manuscript version arising.}

\ethics{Data:}{No underlying data is associated with this article.}

\ethics{Competing Interests}{There are no conflicts of interest.}

\bibliographystyle{abbrv}
\bibliography{ht}

\end{document}